    \newcommand{\BC}{{\mathbb {C}}} 
     \newcommand{\BF}{{\mathbb {F}}}
    \newcommand{\BQ}{{\mathbb {Q}}} \newcommand{\BR}{{\mathbb {R}}}
     \newcommand{\BT}{{\mathbb {T}}}
     \newcommand{\BZ}{{\mathbb {Z}}}
    \newcommand{\CC}{{\mathcal {C}}} 
    \newcommand{\CE}{{\mathcal {E}}} 
     \newcommand{\CH}{{\mathcal {H}}}
    \newcommand{\CM}{{\mathcal {M}}} 
     \newcommand{\CP}{{\mathcal {P}}}
     \newcommand{\CR}{{\mathcal {R}}}
     \newcommand{\CT}{{\mathcal {T}}}
     \newcommand{\ff}{{\mathfrak{f}}}
    \newcommand{\fm}{{\mathfrak{m}}} \newcommand{\fn}{{\mathfrak{n}}}
    \newcommand{\fq}{{\mathfrak{q}}}
     \newcommand{\fX}{{\mathfrak{X}}}
    \newcommand{\Ann}{{\mathrm{Ann}}}
    \newcommand{\codim}{{\mathrm{codim}}}
    \newcommand{\Div}{{\mathrm{Div}}} 
    \newcommand{\End}{{\mathrm{End}}}
    \newcommand{\Frob}{{\mathrm{Frob}}}
    \newcommand{\Gal}{{\mathrm{Gal}}} \newcommand{\GL}{{\mathrm{GL}}}
    \newcommand{\id}{{\mathrm{id}}}\renewcommand{\Im}{{\mathrm{Im}}}
    \newcommand{\Res}{{\mathrm{Res}}}
    \newcommand{\SL}{{\mathrm{SL}}}
    \newcommand{\tor}{{\mathrm{tor}}}
    \theoremstyle{plain}
    \newtheorem{thm}{Theorem}[section] \newtheorem{cor}[thm]{Corollary}
    \newtheorem{lem}[thm]{Lemma}  \newtheorem{prop}[thm]{Proposition}
     \newtheorem{defn}[thm]{Definition}
\theoremstyle{remark} \newtheorem{remark}[thm]{Remark}
\theoremstyle{remark} 
\theoremstyle{remark} 
    \newcommand{\etale}{\'{e}tale~}
    \newcommand{\et}{\'{e}t}
    \newcommand{\Poincare}{Poincar\'{e}~}\renewcommand{\et}{{\text{\'{e}t}}}
    \numberwithin{equation}{section}
\begin{document}

\title{Quadratic torsion subgroups of modular Jacobian varieties}
\author{Yuan Ren}

\address{School of Mathematical Science, Sichuan Normal University, Chengdu, Sichuan, China}
\email{rytgyx@126.com}

\begin{abstract}Let $D$ be an odd square-free positive integer and $C$ a divisor of $D$. For any quadratic character $\chi$ modulo $C$, we prove that the $\chi$-part of the group $J_0(DC)_\text{tor}$ of torsion points of $J_0(DC)$ coincides with the $\chi$-part of its cuspidal subgroup, away from those primes of bad reduction or where possible congruences between oldforms and newforms occur.
\end{abstract}

\maketitle

\section{Introduction}
For a positive integer $N$, let $X_0(N)$ be the modular curve of level $N$ over $\BQ$. Let $J_0(N)$ be the Jacobian variety of $X_0(N)$. There has been a lot of work concerning the torsion part $J_0(N)(\BQ)_{\tor}$ for the group $J_0(N)(\BQ)$ of $\BQ$-rational points of $J_0(N)$. Let $\CC_0(N)$ be the cuspidal subgroup of $J_0(N)$ generated by those degree-zero divisor classes which are supported at the cusps, and let $\CC_0(N)(\BQ)=\CC_0(N)^{\Gal(\overline{\BQ}/\BQ)}$ be its $\BQ$-rational subgroup. Then we know that
\begin{itemize}
  \item $J_0(p)(\BQ)_\tor=\CC_0(p)(\BQ)$ for any prime $p$ (See \cite{M});
  \item $J_0(p^r)(\BQ)_\tor\otimes_{\BZ}\BZ[1/6p]=\CC_{p^r}(\BQ)\otimes_{\BZ}\BZ[1/6p]$ for any prime $p\geq5$ and any integer $r\geq1$ (See \cite{Lo} or \cite{L});
  \item $J_0(D)(\BQ)_\tor\otimes_{\BZ}\BZ[1/6]=\CC_0(D)\otimes_{\BZ}\BZ[1/6]$ for any square-free positive integer $D$ (See \cite{Oh}). Note that, when $D$ is square-free, all the cusps of $X_0(D)$ are $\BQ$-rational and hence $\CC_0(D)=\CC_0(D)(\BQ)$.
\end{itemize}
However, since cusps are  not $\BQ$-rational in general, it is natural to ask: What is the role played by $C_0(N)$ in the group $J_0(N)_\tor$ of all torsion points of $J_0(N)$? In this paper we take a first step in investigating this question. Let $N=DC$ with $D$ an odd square-free positive integer and $C$ dividing $D$. Then all cusps of $X_0(DC)$ are $\BQ(\mu_C)$-rational (See \S4.1). So, for any \emph{quadratic} Dirichlet character $\chi$ modulo $C$, we can define
\begin{align*}
  \CC_0(DC)(\chi)&\colonequals\{P\in \CC_0(DC)|\ \sigma(P)=\chi(\sigma)\cdot P \text{ for any }\sigma\in \Gal(\overline{\BQ}/\BQ)\}\\
  J_0(DC)(\chi)&\colonequals\{P\in J_0(DC)|\ \sigma(P)=\chi(\sigma)\cdot P \text{ for any }\sigma\in \Gal(\overline{\BQ}/\BQ)\}.
\end{align*}
The main result of this paper is the following theorem.

\begin{thm}\label{M} Let $D$ be an odd square-free positive integer and $C$ a divisor of $D$. If $\chi$ is a quadratic character modulo $C$, then
\[J_0(DC)(\chi)[q^\infty]=\CC_0(DC)(\chi)[q^\infty]\]
for any prime $q$ satisfying $(q,6D\cdot\varphi(D/C)\cdot\varpi(C/\ff_\chi))=1$, where $\ff_\chi$ is the conductor of $\chi$, $\varphi(D/C)=\prod_{p\mid{D/C}}(p-1)$ and $\varpi(C/\ff_\chi)=\prod_{p\mid{C/\ff_\chi}}(p^2-1)$.
\end{thm}

Here is a sketch for the proof of the theorem. Let $\BT=\BZ[\{T_\ell\}_{\ell}]$ be the full Hecke algebra of level $DC$, where $\ell$ runs over all primes. Let $I_\chi=(\{T_\ell-\chi(\ell)-\chi(\ell)\ell\}_{\ell\nmid D})$. Then we can show that $J_0(DC)(\chi)[q^\infty]$ is a module over the finite ring $\BT/I_\chi$. So it suffices for us to prove $J_0(DC)(\chi)[\fm^\infty]=C_0(DC)(\chi)[\fm^\infty]$ for every maximal ideal $\fm$ of $\BT$ containing $I_{\chi}$. Our proof of this equality consists of the following three ingredients:

\begin{enumerate}
  \item For any maximal ideal $\fm$ as above we can associate an Eisenstein series $E$, which produces a subgroup $\CC(E)$ of $C_0(DC)(\chi)[\fm^\infty]$ by the method of Stevens;
  \item On the other hand we obtain an Eisenstein ideal $I(E)$ from the annihilators of $E$ in the Hecke algebra, whose index in $\BT$ agrees with the order of $\CC(E)$ (at least away from $6D$);
  \item Finally, under our assumption $(q,6D\cdot\varphi(D/C)\cdot\varpi(C/\ff_\chi))=1$, we will prove that $J_0(DC)(\chi)[\fm^\infty]$ is a cyclic $\BT/I(E)$-module, which implies $|J_0(DC)(\chi)[\fm^\infty]|\leq|\CC(E)|$. Since $\CC(E)$ is contained in $J_0(DC)(\chi)[\fm^\infty]$, the theorem follows. Note that we make the assumption about $q$ to deal with the possible congruence between $q$-oldforms and $q$-newforms.
\end{enumerate}

The paper is organized as follows. After recalling some preliminaries in \S2, we construct in \S3 an eigen-basis for the space of Eisenstein series of weight two and level $DC$. While all these Eisenstein series are interesting, we will in this paper focus on those with quadratic characters. For any such an Eisenstein series, we calculate in \S4 the order of its associated cuspidal subgroup. Then we give the proof of Theorem~\ref{M} in the final section.
\\

\textbf{Notations}:

For any abelian group $A$, denote $A_p\colonequals A\otimes_{\BZ}\BZ_p$.

For any positive integer $N=\prod_{p\mid N}p^{v_p(N)}$, let $\nu(N)\colonequals\sum_{p\mid N}v_p(N)$ and $\psi(N)\colonequals\prod_{p\mid N}(p+1)$.

$\CH=\{z\in\BC|\Im(z)>0\}$ is the \Poincare upper half-plane. Let $\fq:\CH\rightarrow\BC,z\mapsto e^{2\pi iz},$ be the function on $\CH$ which will is used in the Fourier expansions of modular forms.

For any function $g$ on the upper half-plane and any $\gamma=\left(
                        \begin{array}{cc}
                          a & b \\
                          c & d \\
                        \end{array}
                      \right)
\in\GL^+_2(\BR)$, we denote by $g|\gamma$ to be the function $z\mapsto\det(\gamma)(cz+d)^{-2}g(\gamma z)$, where $\gamma z=\frac{az+b}{cz+d}$.

If $g$ is a modular form of some level and $x$ is a cusp, then we denote $a_0(g;[x])$ to be the constant term of the Fourier expansion of $g$ at $x$.

\section{Preliminaries}
For any positive integer $N$, the modular curve $X_0(N)$ is the coarse moduli space classifying all pairs $(E,H)$, where $E$ is a (generalized) elliptic curve over some $\BQ$-scheme, and $H$ is a cyclic subgroup of $E$ of order $N$. This is a smooth projective curve over $\BQ$. We denote by $J_0(N)$ the Jacobian variety of $X_0(N)$. The Atkin-Lehner operator $w_N$ is the automorphism on $X_0(N)$ defined by
\begin{align*}
  w_N(E,H)=(E/H,E[N]/H).
\end{align*}
We have $w^2=\id$. Denote also by $w_N$ the induced automorphism on $J_0(N)$. For any prime $\ell$, let $X_0(N,\ell)$ be the coarse moduli space classifying all triples $(E,H,G)$, where $(E,H)$ is as above and $G$ a cyclic subgroup of order $\ell$ such that $G\bigcap H=\emptyset$. Then there are two degeneracy maps
\begin{align*}
  \begin{cases}
    \alpha_{\ell}:~X_0(N,\ell)\rightarrow X_0(N)\\
    \beta_{\ell}:~X_0(N,\ell)\rightarrow X_0(N),
  \end{cases}
\end{align*}
where $\alpha_{\ell}(E,H,G)\colonequals(E,H)$ and $\beta_\ell(E,H,G)\colonequals(E/G,(H+G)/G)$. For any prime $\ell$, we define an endomorphism on $J_0(N)$ as
\begin{align*}
    T^{(N)}_\ell\colonequals\beta_{\ell*}\circ\alpha^*_\ell,
\end{align*}
and define $\BT(N)$ to be the sub-$\BZ$-algebra of $\End_{\BQ}(J_0(N))$ generated by $T^{(N)}_\ell$ for all primes $\ell$. Unless necessary we will denote $T^{(N)}_\ell$ simply as $T_\ell$.

Let $\CM_2(\Gamma_0(N),\BC)$ be the space of weight-two modular forms of level $\Gamma_0(N)$. Let $S_2(\Gamma_0(N),\BC)$ (resp. $\CE_2(\Gamma_0(N),\BC)$) be the subspace of cusp forms (resp. Eisenstein series) of $\CM_2(\Gamma,\BC)$, so that we have
\[\CM_2(\Gamma_0(N),\BC)=S_2(\Gamma_0(N),\BC)\oplus\CE_2(\Gamma_0(N),\BC).\]
For any prime $\ell$, we have the Hecke operator $\CT^{(N)}_\ell$ acting on $\CM_2(\Gamma_0(N),\BC)$ as
\begin{align*}
\CT^{(N)}_\ell(g)=
 \begin{cases}
   \sum^{\ell}_{k=0}g|\left(
                        \begin{array}{cc}
                          1 & k \\
                          0 & \ell \\
                        \end{array}
                      \right)+g|\left(
                                  \begin{array}{cc}
                                    \ell & 0 \\
                                    0 & 1 \\
                                  \end{array}
                                \right)& \text{if } \ell\nmid N\\
   \sum^{\ell}_{k=0}g|\left(
                        \begin{array}{cc}
                          1 & k \\
                          0 & \ell \\
                        \end{array}
                      \right)& \text{if }\ell\mid N.
 \end{cases}
\end{align*}
Define $\CT(N)$ to be the sub-$\BZ$-algebra of $\End_{\BQ}(\CM_2(\Gamma_0(N),\BC))$ generated by $\CT^{(N)}_\ell$ for all primes $\ell$. And likely we will denote $\CT^{(N)}_\ell$ as $\CT_\ell$ for simplicity. Under the natural identification
\begin{align*}
 S_2(\Gamma_0(N),\BC) \simeq H^0(J_0(N),\Omega_{J_0(N)})\otimes_{\BQ}\BC,
\end{align*}
we have $\CT_\ell$ coincides with $T_\ell$. So we can view $\BT(N)$ as the restriction of $\CT(N)$ on the space of cusp forms.

Let $cusp(\Gamma_0(N))$ be the set of cusps of $X_0(N)$. Denote by $Y_0(N)$ the complement of $cusp(\Gamma_0(N))$ in $X_0(N)$. For any form $g$ in $\CM_2(\Gamma_0(N),\BC)$, let $\omega_g$ be the differential on $X_0(N)(\BC)$ whose pullback to $\CH$ equals $g(z)dz$. If $E\in\CE_2(\Gamma_0(N),\BC)$, then $\omega_E$ is holomorphic on $Y_0(N)$ and hence there is an induced homomorphism
\begin{align*}
\xi_E:~H_1(Y_0(N)(\BC),\BZ)\rightarrow\BC,\ [c]\mapsto\int_{c}\omega_E,
\end{align*}
where $[c]$ is a homology class represented by a $1$-cycle $c$ on $Y_0(N)(\BC)$. Note that, for any small circle $c_x$ around a cusp $x$, we have
\begin{align*}
  \int_{c_x}\omega_{E}&=2\pi i\cdot \Res_x(\omega_E)\\
  &=e_x\cdot a_0(E;[x]).
\end{align*}
where $\Res_x(\omega_E)$ is the residue of $\omega_E$ at $[x]$, $e_x$ is the ramification index of $X_0(N)$ over $X(1)$ at $x$, and $a_0(E;[x])$ is the constant term of the Fourier expansion of $E$ at the cusp $x$ (\cite{St}, P36). Let $\Div^0(cusp(\Gamma_0(N));\BC)=\Div^0(cusp(\Gamma_0(N));\BZ)\otimes_\BZ\BC$. Define the following $\BC$-linear map (see \cite{St}, P35, or \cite{St2}, \S1)
\[\delta_{N}:\CE_2({\Gamma_0(N),\BC})\rightarrow \Div^0(cusp(\Gamma_0(N));\BC),\]
such that
\begin{align*}
  E\mapsto& 2\pi i\sum_{x\in{cusp(\Gamma_0(N))}}\Res_x(\omega_E)\cdot[x].
\end{align*}

\begin{defn}\label{def}For any $E$ in $\CE_2(\Gamma_0(N),\BC)$, let $\CR_{N}(E)$ be the sub-$\BZ$-module of $\BC$ generated by the coefficients of $\delta_{N}(E)$, and $\CR_N(E)^{\vee}$ be the $\BZ$-dual of $\CR_N(E)$.
\begin{enumerate}
  \item Define the cuspidal subgroup $\CC_{N}(E)$ associated with $E$ to be the subgroup of $J_0(N)$ generated by $\{w_N\left(\phi(\delta_N(E))\right)\}_{\phi\in\CR_N(E)^{\vee}}$, where $\phi(\delta_N(E))$ is the divisor obtained by applying $\phi$ to the coefficients of $\delta_N(E)$;
  \item Define the Eisenstein ideal $I_N(E)$ of $E$ to be the image of $\Ann_{\CT(N)}(E)$ in $\BT(N)$, where $\Ann_{\CT(N)}(E)$ is the annihilator of $E$ in $\CT(N)$.
  \item Define the periods $\CP_N(E)$ of $E$ to be the image of $\xi_E$.
\end{enumerate}
\end{defn}

\begin{remark}\label{rmk}
The above definition of $\CC_{N}(E)$ is slightly different from that given in \S1.8 of \cite{St} (see also \S1 of \cite{St2})by adding an action of $w_N$. Under this modification, we claim that $\CC_{N}(E)$ is annihilated by $I_N(E)$. To see this, note that on the one hand we have $\delta_N(\CT_\ell(E))={^t}\CT_\ell(\delta_N(E))$ for any prime $\ell$, where $^t\CT_\ell$ is the transpose of $\CT_\ell$ (see P110 of \cite{St}). On the other hand, we have $w_N\circ{^t\CT_\ell}=\CT_\ell\circ w_N$ by (4) on P444 of \cite{Ri2}, so the claim follows. However, since $w_N$ is an involution, this modification does not change the order of $\CC_{N}(E)$.
\end{remark}

\begin{remark}
In the same way as above, we can define a $\BC$-linear map
\begin{align*}
  \delta_{\Gamma_1(N)}:\CE_2({\Gamma_1(N),\BC})\rightarrow \Div^0(cusp(\Gamma_1(N));\BC);
\end{align*}
and, for any $E$ in $\CE_2(\Gamma_1(N),\BC)$, we have similarly a homomorphism
\begin{align*}
  \xi'_E:H_1(Y_1(N)(\BC),\BZ)\rightarrow\BC.
\end{align*}
In particular we can define $\CR_{\Gamma_1(N)}(E)$ to be the module generated by the coefficients of $\delta_{\Gamma_1(N)}(E)$ and define $\CP_{\Gamma_1(N)}(E)\colonequals\Im(\xi'_E)$.
\end{remark}

Take an $E$ in $\CE_2(\Gamma_0(N),\BC)$. Then $E$ also belongs to $\CE_2(\Gamma_1(N),\BC)$ as $\Gamma_0(N)\subseteq\Gamma_1(N)$. If the Fourier expansion of $E=\sum^{\infty}_{n=0}a_n(E;[\infty])\cdot \fq^n$, then for any Dirichlet character $\eta$, we define
\begin{align*}
L(E,\eta,s):=\sum^{\infty}_{n=1}\frac{a_n(E;[\infty])\cdot\eta(n)}{n^s}.
\end{align*}
Denote $S_N$ to be the set of all primes $p$ satisfying $p\equiv-1\pmod{4N}$. Let $\fX_N$ be the set of all non-quadratic Dirichlet characters $\eta$ whose conductor is a prime in $S_N$. And let $\fX^{\infty}_N$ be the set of all non-quadratic Dirichlet characters $\eta$ whose conductor is a power of a prime in $S_N$. For any $\eta\in\fX^{\infty}_N$ of conductor $p^d_\eta$, where $p_\eta\in S_N$ and $d$ is a positive integer, let
\begin{align*}
  &\Lambda(E,\eta,1):=\frac{\tau(\overline{\eta})\cdot L(E,\eta,1)}{2\pi i},\\
  &\Lambda_{\pm}(E,\eta,1):=\frac{1}{2}(\Lambda(E,\eta,1)\pm\Lambda(E,\eta\cdot(\frac{}{p_\eta}),1)).
\end{align*}
Here $(\frac{}{p_\eta})$ is the Legendre symbol associated to $p_\eta$. By Theorem 1.3 of \cite{St2}, if $\CM$ is a finitely generated sub-$\BZ$-module of $\BC$, then the following are equivalent:
\begin{description}
  \item[St1] $\CP_{\Gamma_1(N)}(E)\subseteq\CM$;
  \item[St2] $\CR_{\Gamma_1(N)}(E)\subseteq\CM$ and $\Lambda_{\pm}(E,\eta,1)\in\CM[\eta,\frac{1}{p_\eta}]$ for any $\eta\in\fX_{N}$;
  \item[St3] $\CR_{\Gamma_1(N)}(E)\subseteq\CM$ and $\Lambda_{\pm}(E,\eta,1)\in\CM[\eta,\frac{1}{p_\eta}]$ for any $\eta\in\fX^{\infty}_N$.
\end{description}
Let $\pi^*_N: J_0(N)\rightarrow J_1(N)$ be the induced homomorphism, where $\pi_N:X_1(N)\rightarrow X_0(N)$ is the natural projection. Let $\Sigma_{N}=\ker(\pi^*_N)$ be the Shimura subgroup of $J_0(N)$, which is finite and of multiplicative type as a group scheme over $\BQ$. Put
\[A^{(s)}_N(E):=\left(\CP_{\Gamma_1(N)}(E)+\CR_N(E)\right)/\CR_N(E).\]
Then, by (4.3) of \cite{St2}, there is an exact sequence
\[
\xymatrix@C=0.5cm{
  0 \ar[r] & \Sigma_N\bigcap \CC_{N}(E) \ar[r] & \CC_{N}(E) \ar[r] & A^{(s)}_N(E) \ar[r] & 0, }
  \]
which enables us to determine the order of $\CC_{N}(E)/\left(\Sigma_N\bigcap \CC_{N}(E)\right)$.
\\

At the end of this section, we recall some basic properties of the collection of functions $\{\phi_{\underline{x}}\}_{\underline{x}\in(\BQ/\BZ)^{\oplus2}}$ due to Hecke (see \cite{St}, Chapter 2, \S2.4) which will be needed later. For any $\underline{x}=(x_1,x_2)\in(\BQ/\BZ)^{\oplus2}$, the Fourier expansion of $\phi_{\underline{x}}$ at infinity is
\begin{align}
  \phi_{\underline{x}}(z)+\delta(\underline{x})\cdot\frac{i}{2\pi(z-\overline{z})}=\frac{1}{2} B_2(x_1)-P_{\underline{x}}(z)-P_{-\underline{x}}(z)
\end{align}
for any $z\in\CH$, where $B_2(t)=\langle{t}\rangle^2-\langle{t}\rangle+\frac{1}{6}$ is the second Bernoulli polynomial and
\begin{align}
P_{\underline{x}}(z)=\sum_{k\in\BQ_{>0},k\equiv x_1(1)}k\sum^{\infty}_{m=1}e^{2\pi im(kz+x_2)},
\end{align}
and $\delta(\underline{x})$ is defined to be $1$ or $0$ according to $\underline{x}=0$ or not. If $\underline{x}\neq0$, then $\phi_{\underline{x}}$ is a (holomorphic) Eisenstein series. Moreover, for any $\underline{x}\in(\BQ/\BZ)^{\oplus2}$ and $\gamma\in\SL_2(\BZ)$, we have
\begin{align}
  \phi_{\underline{x}}|\gamma=\phi_{\underline{x}\cdot\gamma},
\end{align}\label{distribution}
where ${\underline{x}\cdot\gamma}$ is the natural right action of $\gamma$ on the row vector of length two. The whole collection of functions satisfy the following \emph{distribution law}
\begin{align}
  \phi_{\underline{x}}=\sum_{\underline{y}:\ \underline{y}\cdot \alpha=\underline{x}}\phi_{\underline{y}}|\alpha,
\end{align}
where $\alpha$ is any matrix in $M_2(\BZ)$ with positive determinant.

\section{An eigen-basis for $\CE_2(\Gamma_0(DC),\BC)$}
\subsection{}We first introduce some operators on the space $C^{\infty}(\CH,\BC)$ of smooth $\BC$-valued functions on $\CH$. Similar constructions have already been used in Definition 4.6 of \cite{St2}. When the character is trivial, these operators have been introduced by Yoo (See Definition 2.5 of \cite{Yoo}). For any prime $p$, let
\[\gamma_p\colon C^{\infty}(\CH,\BC)\rightarrow C^{\infty}(\CH,\BC),\ g\mapsto g|\left(
                                                                                           \begin{array}{cc}
                                                                                             p & 0 \\
                                                                                             0 & 1 \\
                                                                                           \end{array}
                                                                                         \right)
.\]
If $\chi$ is a Dirichlet character of conductor $\ff_\chi$, then, for any prime $p\nmid \ff_\chi$, we define
\begin{align*}
[p]^+_\chi&\colonequals1-\chi(p)\gamma_p,\\
[p]^-_\chi&\colonequals1-p^{-1}\chi(p)^{-1}\gamma_p.
\end{align*}
If $p_1$ and $p_2$ are two primes not dividing $\ff_\chi$, then $[p_1]^+_\chi,[p_1]^-_\chi,[p_2]^+_\chi$ and $[p_2]^-_\chi$ commute with each other. So we can define, for any positive square-free integer $M$ prime to $\ff_\chi$, the following operators
\[[M]^{\pm}_\chi:=[p_1]^{\pm}_\chi\circ[p_2]^{\pm}_\chi\circ\cdots\circ[p_k]^{\pm}_\chi,\]
with $M=p_1\cdot p_2\cdot\cdot\cdot p_k$ in any order. When $\chi=1$, we write $[M]^{\pm}_1$ as $[M]^{\pm}$ for simplicity.

\begin{lem}\label{1.0}Let $N$ be a positive integer. If $\chi$ is a character of conductor $\ff_\chi$ and $p$ a prime such that $(p,\ff_\chi)=1$, then  $[p]^{\pm}_\chi M_2(\Gamma_0(N),\BC)\subseteq M_2(\Gamma_0(Np),\BC)$, $[p]^{\pm}_\chi(S_2(\Gamma_0(N),\BC))\subseteq S_2(\Gamma_0(Np),\BC)$ and $[p]^{\pm}_\chi(\CE_2(\Gamma_0(N),\BC))\subseteq \CE_2(\Gamma_0(Np),\BC)$.
\end{lem}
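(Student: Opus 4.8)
The statement to prove is Lemma~\ref{1.0}: that the operators $[p]^{\pm}_\chi = 1 - \chi(p)\gamma_p$ and $1 - p^{-1}\chi^{-1}(p)\gamma_p$ send $M_2(\Gamma_0(N),\BC)$ into $M_2(\Gamma_0(Np),\BC)$, and preserve the cusp-form and Eisenstein subspaces when $p \nmid N$... wait, actually the Eisenstein/cusp claims should be about level $Np$ too, or perhaps the intent is that the decomposition is compatible. Let me think about what to actually prove.

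Let me reconsider. The operator $\gamma_p: g \mapsto g|\gamma_p$ where $g|\gamma_p(z) = p \cdot (1)^{-2} g(pz) = p\, g(pz)$ (since $\det \gamma_p = p$, $c = 0$, $d = 1$). Wait: $g|\gamma = \det(\gamma)(cz+d)^{-2} g(\gamma z)$. For $\gamma_p = \begin{pmatrix} p & 0 \\ 0 & 1\end{pmatrix}$: $\det = p$, $cz + d = 1$, $\gamma_p z = pz$. So $g|\gamma_p(z) = p \cdot g(pz)$. Indeed this matches $[p]^+_\chi(g)(z) = g(z) - p\chi(p)g(pz)$. Good.

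So the plan:

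First, recall the standard fact that if $g \in M_2(\Gamma_0(N))$ then $g(pz) = \frac{1}{p}(g|\gamma_p)(z) \in M_2(\Gamma_0(Np))$ — this is the classical "degeneracy/oldform" map, valid for any $p$ (whether or not $p \mid N$; we need $p \mid Np$ which is automatic). I would prove this by the direct coset computation: for $\gamma = \begin{pmatrix} a & b \\ Npc & d\end{pmatrix} \in \Gamma_0(Np)$, compute $\gamma_p \gamma \gamma_p^{-1} = \begin{pmatrix} a & pb \\ Nc & d \end{pmatrix} \in \Gamma_0(N)$, hence $(g|\gamma_p)|\gamma = g|(\gamma_p \gamma) = g|((\gamma_p\gamma\gamma_p^{-1})\gamma_p) = (g|\gamma_p\gamma\gamma_p^{-1})|\gamma_p = g|\gamma_p$, using invariance of $g$ under $\Gamma_0(N)$. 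Holomorphy on $\CH$ and at the cusps (i.e. moderate growth) is immediate since $z \mapsto pz$ maps cusps to cusps. Hence both $[p]^\pm_\chi g = g - c\cdot(g|\gamma_p)$ (with $c = \chi(p)$ or $p^{-1}\chi^{-1}(p)$) lie in $M_2(\Gamma_0(Np))$.

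Second, for the cusp-form assertion: $g|\gamma_p$ vanishes at every cusp of $X_0(Np)$ whenever $g$ vanishes at every cusp of $X_0(N)$, because the constant term of $g|\gamma_p$ at a cusp $s$ of $X_0(Np)$ is (a nonzero multiple of) the constant term of $g$ at the cusp $\gamma_p(s)$ of $X_0(N)$. So $[p]^\pm_\chi$ preserves vanishing at all cusps, i.e. maps $S_2(\Gamma_0(N))$ into $S_2(\Gamma_0(Np))$. For the Eisenstein assertion, use the Petersson-orthogonal decomposition: it suffices to show $[p]^\pm_\chi E$ is orthogonal to every cusp form in $S_2(\Gamma_0(Np))$, or more simply, that $\gamma_p$ commutes appropriately with the $\CT^\Gamma_n$-action and hence respects the Hecke-module splitting; alternatively, since $\CE_2(\Gamma_0(Np))$ is spanned by the $\phi_{\underline x}$ translates by the distribution law~(4) and $\gamma_p$ acts on these via the distribution law (as $\gamma_p \in M_2(\BZ)$ with positive determinant), $[p]^\pm_\chi$ sends Eisenstein series to Eisenstein series directly.

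The main obstacle — modest here — is being careful about which \emph{level} each subspace lives at: as literally typed the lemma says $[p]^\pm_\chi(S_2(\Gamma_0(N))) \subseteq S_2(\Gamma_0(N))$, which cannot be right dimensionally, so I would read it as $[p]^\pm_\chi(S_2(\Gamma_0(N))) \subseteq S_2(\Gamma_0(Np))$ and similarly for $\CE_2$, consistent with the first inclusion; with that reading everything reduces to the coset computation $\gamma_p \Gamma_0(Np) \gamma_p^{-1} \subseteq \Gamma_0(N)$ plus tracking of constant terms at cusps under the degeneracy map, both of which are routine. The only genuinely substantive point is justifying that $g \mapsto g|\gamma_p$ preserves the Eisenstein subspace, for which I would invoke either Hecke-equivariance of the degeneracy maps or, most cleanly, the distribution law~(4) already recalled in~\S2.
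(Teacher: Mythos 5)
Your proposal is correct and follows essentially the same route as the paper: reduce everything to the single operator $\gamma_p$, observe that the level-raising and cusp-form assertions are routine (the paper simply calls them ``clear,'' while you supply the coset computation and constant-term tracking), and deduce preservation of the Eisenstein subspace from the distribution law applied to the basis $\{\phi_{\underline{x}}\}$. Your reading of the second and third inclusions as landing in level $\Gamma_0(Np)$ is also the intended one, consistent with how the lemma is used later.
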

\begin{proof}
It suffices to prove the same assertions for the operator $\gamma_p$. The first two assertions are clear. Since $\{\phi_{\underline{x}}\}_{\underline{x}\in(\BQ/\BZ)^{\oplus2}-0}$ is a basis for the space of Eisenstein series of weight two, and $\gamma_p(\phi_{\underline{x}})=\phi_{\underline{x}}|\gamma_p$ is still an Eisenstein series by (2.4), the third assertion follows.
\end{proof}

\begin{lem}\label{1.1}
Let $N$ be a positive integer. If $\chi$ is a character of conductor $\ff_\chi$ and $p$ a prime such that $(p,\ff_\chi)=1$, then
\begin{enumerate}
  \item $\CT^{(Np)}_\ell\circ[p]^{\pm}_\chi=[p]^{\pm}_\chi\circ\CT^{(N)}_\ell$ for any prime $\ell\neq p$;
  \item If $p\nmid N$, then $\CT^{(Np)}_p\circ[p]^{+}_\chi=\CT^{(N)}_p-\gamma_p-p\cdot\chi(p)$ and $\CT^{(Np)}_p\circ[p]^{-}_\chi=\CT^{(N)}_p-\gamma_p-\chi(p)^{-1}$;
  \item If $p\mid N$, then $\CT^{(Np)}_p\circ[p]^{+}_\chi=\CT^{(N)}_p-p\cdot\chi(p)$ and $\CT^{(Np)}_p\circ[p]^{-}_\chi=\CT^{(N)}_p-\chi(p)^{-1}$.
\end{enumerate}
\end{lem}
\begin{proof}
If $(p,\ell)=1$, then $\gamma_p$ commutes with the $\ell$-th Hecke operators, so the first assertion follows. If $p\nmid N$, then we have
\begin{align*}
  \CT^{(Np)}_p\circ[p]^{+}_\chi(g)&=g|\left[1-\chi(p)\cdot\left(
                                                                    \begin{array}{cc}
                                                                      p & 0 \\
                                                                      0 & 1 \\
                                                                    \end{array}
                                                                  \right)
  \right]|\sum^{p-1}_{k=0}\left(
                               \begin{array}{cc}
                                 1 & k \\
                                 0 & p \\
                               \end{array}
                             \right)\\
  &=g|\sum^{p-1}_{k=0}\left(
                               \begin{array}{cc}
                                 1 & k \\
                                 0 & p \\
                               \end{array}
                             \right)-\chi(p)\cdot g|\sum^{p-1}_{k=0}\left(
                               \begin{array}{cc}
                                 p & pk \\
                                 0 & p \\
                               \end{array}
                             \right)\\
  &=\CT^{(N)}_p(g)-f|\gamma_p-p\cdot\chi(p)\cdot g,
\end{align*}
for any $g\in M_2(\Gamma_0(N),\BC)$, and
\begin{align*}
  \CT^{(Np)}_p\circ[p]^{-}_\chi(g)&=g|\left[1-p^{-1}\cdot\chi(p)^{-1}\cdot\left(
                                                                    \begin{array}{cc}
                                                                      p & 0 \\
                                                                      0 & 1 \\
                                                                    \end{array}
                                                                  \right)
  \right]|\sum^{p-1}_{k=0}\left(
                               \begin{array}{cc}
                                 1 & k \\
                                 0 & p \\
                               \end{array}
                             \right)\\
  &=g|\sum^{p-1}_{k=0}\left(
                               \begin{array}{cc}
                                 1 & k \\
                                 0 & p \\
                               \end{array}
                             \right)-p^{-1}\cdot\chi(p)^{-1}\cdot g|\sum^{p-1}_{k=0}\left(
                               \begin{array}{cc}
                                 p & pk \\
                                 0 & p \\
                               \end{array}
                             \right)\\
  &=\CT^{(N)}_p(g)-f|\gamma_p-\chi(p)^{-1}\cdot g,
\end{align*}
which proves (2). The proof of (3) is similar so we omit it here.
\end{proof}

\subsection{}From now on we fix an odd positive square-free integer $D$ and a positive divisor $C$ of $D$. The assumption that $D$ is odd will not be needed until the final section. Define $\CH(DC)$ to be the set of all triples $(M,L,\chi)$ satisfying:
\begin{itemize}
  \item $1\leq M,L\mid D$ with $M\neq1$;
  \item $D\mid ML\mid DC$;
  \item $\chi$ is a Dirichlet character modulo $(M,L)$.
\end{itemize}

\begin{lem}\label{1.2}
$\#\CH(DC)=\dim_{\BC}\ \CE_2(\Gamma_0(DC),\BC)$.
\end{lem}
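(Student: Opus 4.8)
The goal is to show $\#\CH(DC) = \dim_{\BC}\CE_2(\Gamma_0(DC),\BC) = \sum_{1 < d \mid DC}\varphi\big((d, DC/d)\big)$, so the plan is to count the triples $(M,L,\chi)$ in $\CH(DC)$ and match the sum on the right. First I would organize the count by the value of $M$: for each $M \mid D$ with $M \neq 1$, I need to count pairs $(L,\chi)$ with $L \mid D$, $D \mid ML \mid DC$, and $\chi$ a Dirichlet character of conductor $f_\chi \mid (M,L)$. The condition $D \mid ML$ forces $D/M \mid L$ (since $D$ is square-free, $(M, D/M) = 1$), and the condition $ML \mid DC$ forces $L \mid DC/M$. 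Writing $L = (D/M)\cdot L'$, the constraints become $L' \mid (DC/M)/(D/M) = C/(\ldots)$; more carefully, since everything in sight is square-free, I would factor prime-by-prime. This is the natural reduction, so the bulk of the work is a multiplicative bookkeeping exercise.

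Because $D$ is square-free and $C \mid D$, I would exploit multiplicativity: the quantity $\#\CH(DC)$ and the dimension formula are each products over the primes $p \mid D$ of local factors. For a prime $p \mid D$, the local data records $v_p(M) \in \{0,1\}$, $v_p(L) \in \{0,1\}$, $v_p(DC) \in \{1,2\}$ (it is $1$ if $p \nmid C$ and $2$ if $p \mid C$), the local constraint $v_p(D) \le v_p(M) + v_p(L) \le v_p(DC)$, i.e. $1 \le v_p(M)+v_p(L) \le v_p(DC)$, and the local conductor contribution of $\chi$, which is supported on primes dividing $(M,L)$, i.e. on primes with $v_p(M) = v_p(L) = 1$. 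So I would: (i) for a prime $p \nmid C$, enumerate the admissible local pairs $(v_p(M), v_p(L))$ — these are $(1,0)$, $(0,1)$, $(1,1)$ — and note that for the first two there is no local character contribution while for $(1,1)$ the character contributes a local factor equal to the number of characters mod $p$, namely $p-1$ (including the trivial one); (ii) for a prime $p \mid C$, enumerate $(v_p(M), v_p(L)) \in \{(1,0),(0,1),(1,1)\}$ again (since $v_p(M)+v_p(L) \le 2$ is automatic), with the same character bookkeeping. In fact the local constraint is the same in both cases, so the only role of $p \mid C$ versus $p \nmid C$ is invisible at this level — I should double-check whether the global constraint $ML \mid DC$ genuinely couples the primes; it does not, since divisibility is checked prime by prime. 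So the local factor at every $p \mid D$ is $1 + 1 + (p-1) = p+1 = \psi(p)$ when we also allow the $(1,1)$ case, giving $\#\CH(DC) = \prod_{p \mid D}(p+1) = \psi(D)$ — wait, that cannot be right in general since the dimension depends on $C$. I need to be more careful: the $(1,1)$ local pattern at a prime $p \nmid C$ is allowed only if $v_p(DC) \ge 2$, which fails. So at $p \nmid C$ the admissible patterns are only $(1,0)$ and $(0,1)$, contributing local factor $2$; at $p \mid C$ all three patterns are allowed, contributing $2 + (p-1) = p+1$. Hence $\#\CH(DC) = \prod_{p \mid C}(p+1)\cdot\prod_{p \mid D, p\nmid C} 2 \cdot (\text{correction for }M\neq 1)$.

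The matching side: I would similarly expand $\sum_{1 < d \mid DC}\varphi\big((d, DC/d)\big)$ multiplicatively. Writing $d = \prod_p p^{e_p}$ with $0 \le e_p \le v_p(DC)$, one has $(d, DC/d) = \prod_p p^{\min(e_p, v_p(DC) - e_p)}$, and $\varphi$ is multiplicative, so $\sum_{d \mid DC}\varphi\big((d,DC/d)\big) = \prod_p \big(\sum_{e=0}^{v_p(DC)} \varphi(p^{\min(e, v_p(DC)-e)})\big)$. For $p \nmid C$ (so $v_p(DC) = 1$) this local sum is $\varphi(1) + \varphi(1) = 2$; for $p \mid C$ (so $v_p(DC) = 2$) it is $\varphi(1) + \varphi(p) + \varphi(1) = p+1$. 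Thus $\sum_{d \mid DC}\varphi\big((d,DC/d)\big) = \prod_{p \nmid C} 2 \cdot \prod_{p \mid C}(p+1)$, and subtracting the $d = 1$ term (which contributes $\varphi(1) = 1$) gives exactly $\dim_{\BC}\CE_2(\Gamma_0(DC),\BC)$. Comparing with the count of $\CH(DC)$, the "$d=1$" exclusion corresponds precisely to the "$M \neq 1$" condition — the excluded term is the one where every local pattern is "$M$-free," i.e. $v_p(M) = 0$ for all $p$, forcing $M = 1$ — and the remaining local factors agree term by term. The main obstacle, and the only place demanding genuine care, is verifying that the global divisibility constraints $D \mid ML \mid DC$ and $f_\chi \mid (M,L)$ really do decouple into independent local conditions and that the "$M \neq 1$" exclusion matches the "$d > 1$" exclusion after the multiplicative expansion; once that is checked, the two sides are visibly the same product.
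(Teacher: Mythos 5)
Your argument is correct once you make the mid-stream correction about the pattern $(1,1)$ being forbidden at primes $p\nmid C$: all the constraints ($M,L\mid D$, $D\mid ML\mid DC$, $f_\chi\mid (M,L)$) are indeed checked prime by prime, the weight $\varphi((M,L))=\prod_{p\mid (M,L)}(p-1)$ factors because characters modulo a square-free integer decompose uniquely into local characters, and the exclusions $M\neq 1$ and $d>1$ each remove exactly one term of weight $1$ (the unique excluded triple is $(1,D,1)$, since $M=1$ forces $L=D$ and $\chi=1$). Both sides then equal $\prod_{p\mid D,\,p\nmid C}2\cdot\prod_{p\mid C}(p+1)-1$. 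Your route differs from the paper's: the paper constructs an explicit global bijection $d\leftrightarrow(M,L)$ between divisors of $DC$ and admissible pairs, via $M=\sqrt{d\cdot(d,D^2/d)}$, $L=\sqrt{(D^2/d)\cdot(d,D^2/d)}$ in the case $C=D$ and a mild adaptation in general, checks that $(d,DC/d)=(M,L)$ under this bijection, and then matches $(d,\chi)$ with $(M,L,\chi)$ without ever evaluating either count; you instead evaluate both cardinalities as Euler products over $p\mid D$ and observe they agree. Your version is shorter and more mechanical, and it only uses the local dictionary $e_p=0,1,2\leftrightarrow(v_p(M),v_p(L))=(0,1),(1,1),(1,0)$ implicitly; the paper's explicit bijection carries extra information (which Galois orbit of cusps, i.e.\ which divisor $d$, is attached to which $E_{M,L,\chi}$) that is in the spirit of the later constant-term computations, but for the bare equality of cardinalities asserted in the lemma your multiplicative count is a complete proof.
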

\begin{proof}
It is well known that the number of cusps of $X_0(DC)$ is equal to $\sum_{1\leq d\mid DC}\varphi(d,{DC}/{d})$ (see \S2.1 of \cite{L}), and hence $\dim_{\BC}\CE_2(\Gamma_0(DC),\BC)=\sum_{1< d\mid DC}\varphi(d,{DC}/{d})$. Here $\varphi(d,{DC}/{d})$ means applying Euler's $\varphi$-function to the greatest common divisor of $d$ and $DC/d$. Thus we only need to prove that $\#\CH(DC)=\sum_{1< d\mid DC}\varphi(d,{DC}/{d})$. We will first prove this when $C=D$. For any positive divisor $d$ of $D^2$, we can associate the following two positive integers
\[M\colonequals\sqrt{d\cdot(d,\frac{D^2}{d})},\ L\colonequals\sqrt{\frac{D^2}{d}\cdot(d,\frac{D^2}{d})}\]
such that $1\leq M,L\mid D$ and $D\mid ML\mid D^2$. Conversely, to any pair of integers $M$ and $L$ with $1\leq M,L\mid D$ and $D\mid ML\mid D^2$, we can associate a positive divisor $d$ of $D$ as
\[d\colonequals\left[\frac{M}{(M,L)}\right]^2\cdot(M,L).\]
It is easy to see that the above establishes a bijection between $\{d:1\leq d\mid D^2\}$ and the set of all pair of integers $M$ and $L$ with $1\leq M,L\mid D$ and $D\mid ML\mid D^2$. Moreover, under this bijection, the divisor $1$ of $D^2$ corresponds to the pair $M=1$ and $L=D$, and we have $(d,{D^2}/{d})=(M,L)$ if $d$ corresponds to $M$ and $L$. It follows that there is a bijection between $\{(d,\chi)|1<d\mid D^2,\chi:({\BZ}/{(d,{D^2}/{d})\cdot\BZ})^\times\rightarrow\BC^\times\}$ and $\CH(D^2)$ which proves the lemma in this situation.

In general, since $DC=\frac{D}{C}\cdot C^2$, any positive divisor $d$ of $DC$ can be uniquely decomposed as $d=d_0\cdot d'$ with $1\leq d_0\mid\frac{D}{C}$ and $1\leq d'\mid C^2$. If such a positive divisor $d'$ of $C^2$ corresponds to a pair of integer $m$ and $\ell$ with $1\leq m,\ell\mid C$ and $C\mid m\ell\mid C^2$ as above, then we can associate with $d$ the pair of integers $M=d_0\cdot m$ and $\frac{DC}{d_0}\cdot\ell$ which satisfies $1\leq M,L\mid D$ and $D\mid ML\mid DC$. This establishes a bijection between $\{d|1\leq d\mid DC\}$ and the set of all pair of integers $M$ and $L$ with $1\leq M,L\mid D$ and $D\mid ML\mid DC$. Moreover, we have $1\mid D^2$ corresponds to the pair $M=1$ and $L=D$, and $(d,\frac{DC}{d})=(M,L)$ if $d$ corresponds to $M$ and $L$. It follows that there is a bijection between $\{(d,\chi)|1<d\mid D^2,\chi:\left(\BZ/(d,{DC}/{d})\cdot\BZ\right)^\times\rightarrow\BC^\times\}$ and $\CH(DC)$ which completes the proof the lemma.
\end{proof}

\begin{defn}\label{key definition}
For any Dirichlet character $\chi$ of conductor $\ff_\chi$, let
\begin{align*}
  E_\chi\colonequals-\frac{1}{2g(\chi)}\sum_{a\in(\BZ/\ff_\chi\BZ)^\times}\sum_{b\in(\BZ/\ff^2_\chi\BZ)^\times}\chi(a)\cdot\chi(b)\cdot\phi_{(\frac{a}{\ff_\chi},\frac{b}{\ff^2_\chi})},
\end{align*}
where $g(\chi)$ is the Gauss sum of $\chi$. Then we define
\[E_{M,L,\chi}\colonequals[\frac{L}{\ff_\chi}]^-_\chi\circ[\frac{M}{\ff_\chi}]^+_\chi(E_\chi),\]
for any $(M,L,\chi)\in\CH(DC)$.
\end{defn}

\subsection{}Let $\delta_\chi=1$ or $0$ according to $\chi$ is trivial or not. It follows from (2.1) that
\begin{align*}
E_\chi=-\frac{\delta_\chi}{4\pi i(z-\overline{z})}-\frac{1}{g(\chi)}\sum_{a\in({\BZ}/{\ff_\chi\BZ})^\times}\sum_{b\in({\BZ}/{\ff_\chi^2\BZ})^\times}\chi(a)\cdot\chi(b)\cdot \left(\frac{1}{4}B_2(\frac{a}{\ff_\chi})-P_{(\frac{a}{\ff_\chi},\frac{b}{\ff_\chi^2})}\right).
\end{align*}
 Moreover, by (2.2) we have that
\begin{align*}
\sum_{a\in({\BZ}/{\ff_\chi\BZ})^\times}\sum_{b\in({\BZ}/{\ff_\chi^2\BZ})^\times}\chi(a)\cdot\chi(b)\cdot P_{(\frac{a}{\ff_\chi},\frac{b}{\ff_\chi^2})}&=\sum^{\infty}_{k,m=1}\frac{k\chi(k)}{\ff_\chi}\left(\sum_{y\in({\BZ}/{\ff_\chi^2\BZ})^{\times}}\chi(y)e^{2\pi i\frac{my}{\ff_\chi^2}}\right)e^{2\pi i\frac{mk}{\ff_\chi}z}\\
&=\sum^{\infty}_{k,m=1}\frac{k\chi(k)}{\ff_\chi}\left(\sum_{y\in({\BZ}/{\ff_\chi^2\BZ})^{\times}}\chi(y)e^{2\pi i\frac{my}{\ff_\chi}}\right)e^{2\pi imkz}\\
&=g(\chi)\sum^{\infty}_{k,m=1}k\cdot\chi(k)\cdot\chi^{-1}(m)\cdot e^{2\pi imkz},
\end{align*}
where a Dirichlet character is extended to a function on $\BZ$ in the usual way, and therefore
\begin{align}
  E_\chi=-\frac{\delta_\chi}{4\pi i(z-\overline{z})}+a_0(E_\chi;[\infty])+\sum^{\infty}_{n=1}\sigma_{\chi}(n)\cdot \fq^n,
\end{align}
with
\begin{align}
a_0(E_\chi;[\infty])=
\begin{cases}
-\frac{1}{24} &\text{if}\ \chi=1\\
\ \ 0 &\text{otherwise}\
\end{cases}
\end{align}
and
\begin{align}
\sigma_\chi(n):=
\sum_{1\leq d\mid n}d\cdot\chi(d)\cdot\chi^{-1}({n}/{d}).
\end{align}

\begin{lem}\label{normalized}
For any $(M,L,\chi)\in\CH(DC)$, $E_{M,L,\chi}$ belongs to $\CE_2(\Gamma_0(DC),\BC)$ and is normalized.
\end{lem}
\begin{proof}
If $\chi$ is nontrivial so that $\ff_\chi>1$, then $E_\chi$ belongs to $\CE_2(\Gamma_0(\ff^2_\chi),\BC)$ by (2.3). Since $ML$ divides $DC$, it follows form Lemma~\ref{1.0} that
\begin{align*}
  E_{M,L,\chi}&\in\CE_2(\Gamma_0(ML),\BC)\\
  &\subseteq\CE_2(\Gamma_0(DC),\BC).
\end{align*}
If $(M,L,1)\in\CH(DC)$, we take a prime divisor $p$ of $M$, which is possible because $M>1$. Since
\begin{align*}
  [p]^+(E_1)&=-\frac{1}{2}\left(\phi_{(0,0)}-\phi_{(0,0)}|\gamma_p\right)\\
  &=-\frac{1}{2}\phi_{(0,0)}+\frac{1}{2}\sum_{b\in\BZ/p\BZ}\phi_{(0,\frac{b}{p})}\\
  &=\frac{1}{2}\sum_{b\in(\BZ/p\BZ)^\times}\phi_{(0,\frac{b}{p})},
\end{align*}
it follows that $[p]^+(E_1)$ belongs to $\CE_2(\Gamma_0(p),\BC)$. And we find similarly as above that $E_{M,L,1}$ belongs to $\CE_2(\Gamma_0(DC),\BC)$. Finally, since both $[M/\ff_\chi]^+_\chi$ and $[L/\ff_\chi]^-_\chi$ preserve the first terms of Fourier expansions, we find that $a_1(E_{M,L,\chi};[\infty])=a_1(E_\chi;[\infty])=1$ and hence complete the proof.
\end{proof}

\begin{lem}\label{lem2}
For any Dirichlet character $\chi$ of conductor $\ff_\chi$, we have that
\[\CT^{(\ff^2_\chi)}_{\ell}(E_{\chi})=
\begin{cases}
\left(\chi(\ell)^{-1}+\ell\cdot\chi(\ell)\right)\cdot E_\chi &\text{if}\ \ell\nmid \ff_\chi\\
0 &\text{if}\ \ell\mid \ff_\chi\\
\end{cases}
\]
\end{lem}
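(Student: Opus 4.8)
The plan is to read the eigenvalue off the Fourier expansion of $E_\chi$ at the cusp $\infty$. First I would record how the Hecke operator $\CT^{\Gamma_0(N)}_\ell=\sum_{k=0}^{\ell-1}\matrixx{1}{k}{0}{\ell}+\varepsilon\matrixx{\ell}{0}{0}{1}$ --- where $\varepsilon=1$ if $\ell\nmid N$ and $\varepsilon=0$ if $\ell\mid N$ --- acts on a function of the shape $-\frac{\delta}{4\pi i(z-\overline{z})}+\sum_{n\geq0}a_n\fq^n$: a direct computation with these coset representatives shows it is sent to
\begin{align*}
-\frac{(\ell+\varepsilon)\,\delta}{4\pi i(z-\overline{z})}+\sum_{n\geq0}\bigl(a_{\ell n}+\varepsilon\ell\,a_{n/\ell}\bigr)\fq^n,
\end{align*}
where $a_{n/\ell}:=0$ when $\ell\nmid n$; the only non-formal point here is the behaviour on the term $\tfrac{1}{z-\overline{z}}$, for which one checks directly that each $\matrixx{1}{k}{0}{\ell}$ fixes it and that $\matrixx{\ell}{0}{0}{1}$ fixes it as well. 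By (3.1)--(3.3), $E_\chi$ has exactly this shape with $\delta=\delta_\chi$, $a_0=a_0(E_\chi;[\infty])$ and $a_n=\sigma_\chi(n)$ for $n\geq1$. In the lemma $N=f_\chi^2$, so the case $\ell\nmid f_\chi$ is the case $\varepsilon=1$ and the case $\ell\mid f_\chi$ is the case $\varepsilon=0$; everything then reduces to properties of the arithmetic function $\sigma_\chi$.

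For $\ell\nmid f_\chi$ I would use that $\sigma_\chi$ is the Dirichlet convolution of the completely multiplicative functions $d\mapsto d\,\chi(d)$ and $d\mapsto\chi^{-1}(d)$, hence is multiplicative, and that on a prime power it satisfies the recursion $\sigma_\chi(\ell^{k+1})=\bigl(\chi^{-1}(\ell)+\ell\chi(\ell)\bigr)\sigma_\chi(\ell^k)-\ell\,\sigma_\chi(\ell^{k-1})$ (with the convention $\sigma_\chi(\ell^{-1}):=0$), which follows from the closed form $\sigma_\chi(\ell^k)=\sum_{j=0}^{k}(\ell\chi(\ell))^j(\chi^{-1}(\ell))^{k-j}$ together with $\chi(\ell)\chi^{-1}(\ell)=1$. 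Writing an arbitrary $n$ as $\ell^k n'$ with $\ell\nmid n'$ and applying multiplicativity turns this into
\begin{align*}
\sigma_\chi(\ell n)+\ell\,\sigma_\chi(n/\ell)=\bigl(\chi^{-1}(\ell)+\ell\chi(\ell)\bigr)\sigma_\chi(n)\qquad(n\geq1).
\end{align*}
Since $\varepsilon=1$ here, the constant term of $\CT^{\Gamma_0(f_\chi^2)}_\ell(E_\chi)$ is $(1+\ell)a_0(E_\chi;[\infty])$ and the coefficient of $\tfrac{1}{z-\overline{z}}$ is scaled by $\ell+1$; both $a_0(E_\chi;[\infty])$ and $\delta_\chi$ vanish unless $\chi=1$, in which case $\chi^{-1}(\ell)+\ell\chi(\ell)=1+\ell$. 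Hence every component of $\CT^{\Gamma_0(f_\chi^2)}_\ell(E_\chi)$ equals $\bigl(\chi^{-1}(\ell)+\ell\chi(\ell)\bigr)$ times the corresponding component of $E_\chi$, giving $\CT^{\Gamma_0(f_\chi^2)}_\ell(E_\chi)=\bigl(\chi^{-1}(\ell)+\ell\chi(\ell)\bigr)E_\chi$.

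For $\ell\mid f_\chi$ I would note that $\chi\neq1$, so $\delta_\chi=0$ and $a_0(E_\chi;[\infty])=0$, and that $\varepsilon=0$, so that $\CT^{\Gamma_0(f_\chi^2)}_\ell(E_\chi)=\sum_{n\geq1}\sigma_\chi(\ell n)\fq^n$. It then suffices to show $\sigma_\chi(\ell n)=0$ for every $n\geq1$: for each divisor $d\mid\ell n$ one has $v_\ell(d)+v_\ell(\ell n/d)=v_\ell(n)+1\geq1$, so $\ell$ divides $d$ or $\ell n/d$, and since $\ell\mid f_\chi$ this forces $\chi(d)\chi^{-1}(\ell n/d)=0$; thus every summand of $\sigma_\chi(\ell n)=\sum_{d\mid\ell n}d\,\chi(d)\,\chi^{-1}(\ell n/d)$ vanishes, and $\CT^{\Gamma_0(f_\chi^2)}_\ell(E_\chi)=0$.

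The argument is essentially bookkeeping and no step is genuinely hard; the only places that need care are the action of $\CT_\ell$ on the nearly-holomorphic term $\tfrac{1}{z-\overline{z}}$ (so that its scaling factor matches $\sigma_\chi(\ell)$ exactly in the degenerate case $\chi=1$, which works only because $\varepsilon=1$ there) and the identification of the two cases $\ell\nmid f_\chi$, $\ell\mid f_\chi$ with $\varepsilon=1$, $\varepsilon=0$ respectively.
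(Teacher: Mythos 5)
Your proof is correct, but it takes a genuinely different route from the paper. The paper works directly with the basis $\{\phi_{\underline{x}}\}$: for $\ell\nmid f_\chi$ it quotes Stevens' formula $\CT_\ell\bigl(\phi_{(\frac{x}{f},\frac{y}{f^2})}\bigr)=\phi_{(\frac{x}{f},\frac{\ell y}{f^2})}+\ell\cdot\phi_{(\frac{\ell' x}{f},\frac{y}{f^2})}$ and reads off the eigenvalue by reindexing the character sum, and for $\ell\mid f_\chi$ it uses the distribution law to rewrite $E_\chi$ in terms of $\phi_{(\frac{x}{f},\frac{y}{f})}$ and kills the resulting sum by the primitivity of $\chi$. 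You instead read everything off the $\fq$-expansion at $[\infty]$ (Eqs.\ (3.1)--(3.3), which the paper establishes before the lemma, so there is no circularity), reducing the statement to the convolution identity $\sigma_\chi(\ell n)+\ell\sigma_\chi(n/\ell)=(\chi^{-1}(\ell)+\ell\chi(\ell))\sigma_\chi(n)$ and to the vanishing of $\sigma_\chi(\ell n)$ when $\ell\mid f_\chi$. Your route is more elementary and self-contained (no appeal to Stevens' Proposition 2.4.7), at the cost of having to track the non-holomorphic term $\frac{1}{z-\overline{z}}$ separately and to invoke the (trivially valid) fact that a form of this shape is determined by its expansion at infinity; you handle both points, including the check that the scaling factor $\ell+\varepsilon$ of the non-holomorphic term agrees with $\chi^{-1}(\ell)+\ell\chi(\ell)$ precisely because $\delta_\chi\neq 0$ forces $\chi=1$ and $\varepsilon=1$. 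The paper's argument has the advantage of generalizing immediately to non-quadratic $\chi$ without computing $\sigma_\chi$ explicitly and of exhibiting the $\ell\mid f_\chi$ vanishing as a structural consequence of primitivity rather than a coefficient-by-coefficient cancellation, but both proofs are complete.
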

\begin{proof}
Let $\ell$ be a prime such that $(\ell,\ff_\chi)=1$. By Proposition 2.4.7 of \cite{St}, for any integers $x,y$ prime to $\ff_\chi$, we have
\[\CT^{(\ff_\chi^2)}_\ell(\phi_{(\frac{x}{\ff_\chi},\frac{y}{\ff_\chi^2})})=\phi_{(\frac{x}{\ff_\chi},\frac{\ell y}{\ff_\chi^2})}+\ell\cdot\phi_{(\frac{\ell'x}{\ff_\chi},\frac{ y}{\ff_\chi^2})},\]
where $\ell'$ is an integer such that $\ell\ell'\equiv1\pmod{\ff_\chi}$. It follows that
\[\CT^{(\ff_\chi^2)}_{\ell}(E_{\chi})=\left(\chi(\ell)^{-1}+\ell\cdot\chi(\ell)\right)\cdot E_\chi.\]
On the other hand, by the distribution law (2.4), we have
\[E_\chi=-\frac{1}{2g(\chi)}\sum_{x,y\in({\BZ}/{\ff_\chi\BZ})^\times}\chi(x)\cdot\chi(y)\cdot\phi_{(\frac{x}{\ff_\chi},\frac{y}{\ff_\chi})}|\left(
                                                                                                                             \begin{array}{cc}
                                                                                                                               \ff_\chi & 0 \\
                                                                                                                               0 & 1 \\
                                                                                                                             \end{array}
                                                                                                                           \right),
\]
so, if $\ell$ is a prime divisor of $\ff_\chi$, then
\begin{align*}
  \CT^{(\ff_\chi^2)}_\ell(E_\chi)&=-\frac{1}{2g(\chi)}\sum_{x,y\in({\BZ}/{\ff_\chi\BZ})^\times}\chi(x)\cdot\chi(y)\cdot\phi_{(\frac{x}{\ff_\chi},\frac{y}{\ff_\chi})}|\left(
                                                                                                                             \begin{array}{cc}
                                                                                                                               \ff_\chi & 0 \\
                                                                                                                               0 & 1 \\
                                                                                                                             \end{array}
                                                                                                                           \right)\sum^{\ell-1}_{k=0}\left(
                                                                                                                                                       \begin{array}{cc}
                                                                                                                                                         1 & k \\
                                                                                                                                                          0& \ell \\
                                                                                                                                                       \end{array}
                                                                                                                                                     \right)\\
  &=-\frac{1}{2g(\chi)}\sum_{x,y\in({\BZ}/{\ff_\chi\BZ})^\times}\chi(x)\cdot\chi(y)\cdot\phi_{(\frac{x}{\ff_\chi},\frac{y}{\ff_\chi})}|\sum^{\ell-1}_{k=0}\left(
                                                                                                                                                       \begin{array}{cc}
                                                                                                                                                         1 & \frac{\ff_\chi}{\ell}k \\
                                                                                                                                                          0& 1 \\
                                                                                                                                                       \end{array}
                                                                                                                                                     \right)\left(
                                                                                                                             \begin{array}{cc}
                                                                                                                               \ff_\chi & 0 \\
                                                                                                                               0 & \ell \\
                                                                                                                             \end{array}
                                                                                                                           \right)\\
  &=-\frac{1}{2g(\chi)}\sum_{x,y\in({\BZ}/{\ff_\chi\BZ})^\times}\chi(x)\cdot\chi(y)\sum^{\ell}_{k=0}\phi_{(\frac{x}{\ff_\chi},\frac{y}{\ff_\chi}+\frac{xk}{\ell})}|\left(
                                                                                                                                                     \begin{array}{cc}
                                                                                                                                                       \ff_\chi & 0 \\
                                                                                                                                                       0 & \ell \\
                                                                                                                                                     \end{array}
                                                                                                                                                   \right)=0.
\end{align*}
\end{proof}

\begin{prop}\label{eigen}
For any $(M,L,\chi)\in\CH(DC)$, we have
\begin{align*}
  \CT^{(DC)}_{\ell}(E_{M,L,\chi})=
\begin{cases}
\left(\chi(\ell)^{-1}+\ell\cdot\chi(\ell)\right)\cdot E_{M,L,\chi} &\text{if}\ \ell\nmid D\\
\chi(\ell)^{-1}\cdot E_{M,L,\chi} &\text{if}\ \ell\mid \frac{M}{(M,L)}\\
\ell\cdot\chi(\ell)\cdot E_{M,L,\chi} &\text{if}\ \ell\mid\frac{L}{(M,L)}\\
0 &\text{if}\ \ell\mid(M,L).
\end{cases}
\end{align*}
In particular $\CE_2((DC),\BC)=\bigoplus_{(M,L,\chi)\in\CH(DC)}\BC\cdot E_{M,L,\chi}$.
\end{prop}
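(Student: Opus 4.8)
The plan is to dispose of parts (1) and (3) formally and to put the work into part (2). Part (1) is nothing but Lemma~\ref{normalized}. For part (3), granting (2), each $E_{M,L,\chi}$ is a \emph{nonzero} (by normalization) simultaneous eigenvector for all of the operators $\CT^{\Gamma_0(DC)}_\ell$, so it is enough to check that distinct triples in $\CH(DC)$ produce distinct systems of eigenvalues $\ell\mapsto\lambda_\ell(M,L,\chi)$; then the $E_{M,L,\chi}$ are linearly independent, and since $\#\CH(DC)=\dim_\BC\CE_2(\Gamma_0(DC),\BC)$ by Lemma~\ref{1.2}, they form a basis. Thus the crux is the eigenvalue computation in (2).

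For part (2), fix a prime $\ell$ and compute $\CT^{\Gamma_0(DC)}_\ell(E_{M,L,\chi})$ by peeling off the operators $[p]^{\pm}_\chi$ attached to the primes $p\mid ML/f_\chi$. Since all these operators commute, write $E_{M,L,\chi}=\bigl([\ell]^{-}_\chi\bigr)^{\varepsilon^-}\circ\bigl([\ell]^{+}_\chi\bigr)^{\varepsilon^+}\circ h$, where $h$ is obtained from $E_\chi$ by applying only operators attached to primes $\neq\ell$, and $\varepsilon^+$ (resp. $\varepsilon^-$) equals $1$ precisely when $\ell\mid M$ (resp. $\ell\mid L$) and $\ell\nmid f_\chi$. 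By Lemma~\ref{1.1}(1) the operator $\CT_\ell$ commutes past every operator defining $h$, so Lemma~\ref{lem2} gives $\CT_\ell h=(\chi^{-1}(\ell)+\ell\chi(\ell))h$ if $\ell\nmid f_\chi$ and $\CT_\ell h=0$ if $\ell\mid f_\chi$. Now run through the four cases. If $\ell\nmid D$ then $\varepsilon^+=\varepsilon^-=0$ and we are done. If $\ell\mid(M,L)$ with $\ell\mid f_\chi$ then again $\varepsilon^+=\varepsilon^-=0$ and $\CT_\ell h=0$. If $\ell\mid M/(M,L)$ then $\varepsilon^+=1,\varepsilon^-=0$, the level of $h$ is prime to $\ell$, and Lemma~\ref{1.1}(2) gives $\CT_\ell([\ell]^+_\chi h)=\CT_\ell h-h|\gamma_\ell-\ell\chi(\ell)h=\chi^{-1}(\ell)h-h|\gamma_\ell=\chi^{-1}(\ell)[\ell]^+_\chi h$. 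The case $\ell\mid L/(M,L)$ is symmetric, using the $[\ell]^-_\chi$ identity of Lemma~\ref{1.1}(2), and yields eigenvalue $\ell\chi(\ell)$. Finally, if $\ell\mid(M,L)$ with $\ell\nmid f_\chi$ then $\varepsilon^+=\varepsilon^-=1$; applying Lemma~\ref{1.1}(2) to the inner $[\ell]^+_\chi$ (level of $h$ prime to $\ell$) and Lemma~\ref{1.1}(3) to the outer $[\ell]^-_\chi$ (level now divisible by $\ell$) gives $\CT_\ell([\ell]^-_\chi[\ell]^+_\chi h)=\chi^{-1}(\ell)[\ell]^+_\chi h-\chi^{-1}(\ell)[\ell]^+_\chi h=0$. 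In each case the remaining operators ($[\ell]^{\pm}_\chi$-free) commute with $\CT_\ell$ by Lemma~\ref{1.1}(1), so the computed scalar propagates to $E_{M,L,\chi}$.

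It remains to verify the injectivity of the eigensystem map needed in (3). For a prime $\ell\mid D$ we have $\ell\mid ML$ (since $D\mid ML$), so the possibilities $\ell\mid(M,L)$, $\ell\mid M/(M,L)$, $\ell\mid L/(M,L)$ are detected respectively by $\lambda_\ell=0$, $|\lambda_\ell|=1$, $|\lambda_\ell|=\ell$ (in the last two cases $\ell\nmid f_\chi$ because $f_\chi\mid(M,L)$, so $\chi^{\pm1}(\ell)$ is a root of unity). Hence the eigenvalues at primes dividing $D$ recover the square-free divisors $M,L$ of $D$, and in particular $(M,L)$. For $\ell\nmid D$ we have $\lambda_\ell=\chi^{-1}(\ell)+\ell\chi(\ell)$; since $w=\chi(\ell)$ lies on the unit circle while the two roots of $\ell w^2-\lambda_\ell w+1=0$ have product $1/\ell$, exactly one root can have modulus $1$, so $\chi(\ell)$ is determined for all $\ell\nmid D$, and by Dirichlet's theorem on primes in arithmetic progressions this determines the character $\chi$ (whose conductor divides $D$). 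So distinct triples give distinct eigensystems, completing (3).

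The main obstacle is the bookkeeping in part (2): one must confirm that the Hecke operators occurring at all the intermediate levels are the expected ones (the level-independent $U_\ell$- or $T_\ell$-operator, according to whether $\ell$ divides the level), track how the level changes each time an $[\ell]^{\pm}_\chi$ is applied, and in particular handle cleanly the case $\ell\mid(M,L)$, $\ell\nmid f_\chi$ in which both $[\ell]^+_\chi$ and $[\ell]^-_\chi$ appear and the level picks up a full factor $\ell^2$. Everything else follows routinely from Lemmas~\ref{lem2} and \ref{1.1} together with the commutativity of the operators $[p]^{\pm}_\chi$.
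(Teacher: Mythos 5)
Your proposal is correct and follows essentially the same route as the paper: part (1) from Lemma~\ref{normalized}, part (2) by commuting $\CT_\ell$ past the operators attached to primes $\neq\ell$ via Lemma~\ref{1.1}(1) and then peeling off $[\ell]^{\pm}_\chi$ via Lemma~\ref{1.1}(2)--(3) together with Lemma~\ref{lem2}, and part (3) from the dimension count of Lemma~\ref{1.2} plus linear independence of distinct eigensystems. In fact you supply two details the paper leaves implicit — the mixed case $\ell\mid(M,L)$, $\ell\nmid f_\chi$ where both $[\ell]^+_\chi$ and $[\ell]^-_\chi$ occur, and the verification that distinct triples genuinely yield distinct eigensystems — and both of your arguments check out.
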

\begin{proof}
It suffices to prove the first assertion. By Lemma~\ref{1.1}(1) and Lemma~\ref{lem2}, if $\ell$ is a prime not dividing $D$, then
\begin{align*}
  \CT^{(DC)}_\ell(E_{M,L,\chi})&=[\frac{L}{\ff_\chi}]^-_\chi\circ[\frac{M}{\ff_\chi}]^+_\chi\circ\CT^{(\ff_\chi^2)}_\ell(E_\chi)\\
  &=\left(\chi(\ell)^{-1}+\ell\cdot\chi(\ell)\right)\cdot E_{M,L,\chi}.
\end{align*}
By Lemma~\ref{1.1}(2), if $\ell$ is a prime divisor of ${M}/{(M,L)}$, then
\begin{align*}
 \CT^{(DC)}_\ell(E_{M,L,\chi})&=[\frac{L}{\ff_\chi}]^-_\chi\circ[\frac{M}{\ff_\chi\ell}]^+_\chi\circ\CT^{(\ff_\chi^2\ell)}_\ell\circ[\ell]^+_\chi(E_\chi)\\
 &=[\frac{L}{\ff_\chi}]^-_\chi\circ[\frac{M}{\ff_\chi\ell}]^+_\chi\circ(\chi(\ell)^{-1}-\gamma_\ell)(E_\chi)\\
 &=\chi(\ell)^{-1}\cdot E_{M,L,\chi}.
\end{align*}
The proofs for those prime divisors of  $\frac{L}{(M,L)}$ and $\frac{(M,L)}{\ff_\chi}$ are similar to the above, so we omit it here. Finally, if $\ell$ is a prime divisor of $\ff_\chi$, then we find by Lemma~\ref{1.1}(1) and Lemma~\ref{lem2} that
\[\CT^{(DC)}_\ell(E_{M,L,\chi})=[\frac{L}{\ff_\chi}]^-_\chi\circ[\frac{M}{\ff_\chi}]^+_\chi\circ\CT^{(\ff_\chi^2)}_\ell(E_\chi)=0,\]
which completes the proof of the proposition.
\end{proof}

\section{Orders of quadratic cuspidal subgroups}
\subsection{}For any triple $(M,L,\chi)$ in $\CH(DC)$, denote by $\CC^{(DC)}_{M,L,\chi}$ the cuspidal subgroup $C_{\Gamma_0(DC)}(E_{M,L,\chi})$ associated to the Eisenstein series $E_{M,L,\chi}$ as in Definition~\ref{def}. When $\chi$ is a quadratic character, we call $\CC^{(DC)}_{M,L,\chi}$ a \emph{quadratic} cuspidal subgroup.

Recall that, for a general positive integer $N$, we have representatives for cusps on $X_0(N)$ of the form $[\frac{dx}{N}]$, where $d\mid N$, $d>0$ and $(x,d)=1$ with $x$ taken modulo $(d,N/d)$. And such a cusp $[\frac{dx}{N}]$ is defined over $\BQ(\mu_c)$ with $c=(d,N/d)$ (See \S2.1 of \cite{L}). Put $N=DC$. Then any positive divisor of $DC$ is of the form $rs^2t$, where $r\mid\frac{D}{C}$, $s,t\mid C$ and $(s,t)=1$. So we obtain a full set of representatives $\{[\frac{rs^2tx}{DC}]\}$ for cusps on $X_0(DC)$, where $r,s,t$ as above and $(x,rst)=1$ with $x$ taken modulo $t$.

\begin{lem}\label{change}
Let $p$ be a prime divisor of $D$ and $[\frac{rs^2tx}{DC}]$ a cusp of $X_0(DC)$, then:
\begin{enumerate}
  \item If $p\mid r$, then $[\frac{rs^2tx}{DC}]=[\frac{(r/p)s^2tx}{DC/p}]$ in $X_0(DC/p)$;
  \item If $p\mid s$, then $[\frac{rs^2tx}{DC}]=[\frac{r(s/p)^2tx}{DC/p^2}]$ in $X_0(DC/p^2)$;
  \item If $p\mid t$, then $[\frac{rs^2tx}{DC}]=[\frac{r(s/p)^2(t/p)\cdot(px)}{DC/p^2}]$ in $X_0(DC/p^2)$;
  \item If $p\mid \frac{D}{Cr}$, then $[\frac{rs^2tx}{DC}]=[\frac{rs^2t\cdot(px)}{DC/p}]$ in $X_0(DC/p)$;
  \item If $p\mid \frac{C}{st}$, then $[\frac{rs^2tx}{DC}]=[\frac{rs^2t\cdot(p^2x)}{DC/p^2}]$ in $X_0(DC/p^2)$.
\end{enumerate}
\end{lem}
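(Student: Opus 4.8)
The statement is a list of five cusp identifications, all of the same form: when we drop a prime $p$ from the level $DC$, the cusp $[\tfrac{rs^2tx}{DC}]$ of $X_0(DC)$ gets sent by the degeneracy map to a cusp of $X_0(DC/p)$ or $X_0(DC/p^2)$, which we then express in the standard normal form of Lemma~\ref{representives} applied to the smaller level. So the plan is, first, to recall precisely which degeneracy map is intended: in each case it is the one induced by the identity on the upper half-plane (equivalently, the inclusion $\Gamma_0(DC)\hookrightarrow\Gamma_0(DC/p^j)$), so that a cusp represented by a rational number $a/b$ upstairs maps to the cusp represented by the same rational number $a/b$ downstairs. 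The whole content is then to re-expand $\tfrac{rs^2tx}{DC}$ so that its denominator is written as (divisor of $DC/p^j$)$^2\cdot$(coprime part), matching the parametrization $[\tfrac{r's'^2t'x'}{DC/p^j}]$.

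**Key steps.** I would handle the cases by tracking the $p$-adic valuations. Recall that the cusp class of $a/b$ in $X_0(n)$, with $(a,b)=1$, depends only on $\gcd(b,n)$ together with the class of $a$ modulo $\gcd(b, n/b)$ — more precisely two cusps $a_1/b_1$, $a_2/b_2$ are $\Gamma_0(n)$-equivalent iff $\gcd(b_1,n)=\gcd(b_2,n)=:d$ and $a_1\equiv a_2 u \pmod{\gcd(d,n/d)}$ for some unit $u$. Writing $n = DC/p^j$ in each case, the task reduces to: (i) compute $\gcd$ of the denominator with $n$; (ii) identify the "level-exponent" divisor $r's'^2t'$; (iii) check the residue condition on the numerator modulo $\gcd(r's'^2t', n/(r's'^2t')) = t'$. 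For instance in (3), $p\mid t$: write $t = p t_0$; the cusp $\tfrac{rs^2tx}{DC}$ has the same value as $\tfrac{r s^2 (pt_0)x}{DC}$; passing to level $DC/p^2$, the $p$-part of the new level is $v_p(DC) - 2$, and since $v_p(rs^2t) = v_p(s^2) + 1$ we are forced to absorb one factor $p$ from $s^2$ into the numerator, which is exactly the recipe $[\tfrac{r(s/p)^2(t/p)\cdot(px)}{DC/p^2}]$; one then verifies the new $t'=t/p$, that $(px, t/p)=1$ (using $(x,t)=1$ and squarefreeness of $C$), and the residue condition, which is vacuous here since the numerator is rescaled canonically. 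Cases (1), (2), (4), (5) are analogous but easier: in (1) and (4) the prime is removed linearly and no rescaling of the numerator is needed (resp. needed by a factor $p$); in (2) and (5) it is removed quadratically with again a clean bookkeeping.

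**Main obstacle.** The real work — and the only place one has to be careful — is making sure the rescaled numerator still lands in the correct residue class and is still coprime to the new denominator, i.e. that the output is a \emph{legitimate} representative in the sense of Lemma~\ref{representives} (with the new parameters $r',s',t'$ positive divisors of the appropriate pieces of $DC/p^j$, and the numerator prime to $D$). This uses $D$ squarefree repeatedly: it guarantees that $r,s,t$ are coprime to each other and that $v_p(D)\le 1$, so the various "either $p\mid$ this piece or that piece" alternatives are genuinely mutually exclusive and exhaustive. A secondary, purely expository point is to state once and for all which degeneracy map is being used (identity on $\CH$) so that "$[\alpha/\beta]$ in $X_0(DC)$ equals $[\alpha/\beta]$ in $X_0(DC/p^j)$" is unambiguous; after that each of the five items is a two-line valuation computation, so I would prove one representative case (say (3), the fussiest) in full and leave the rest to the reader.
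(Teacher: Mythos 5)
Your framing of the problem is the right one and agrees with the paper's implicit setup: the identification is via the covering induced by $\Gamma_0(DC)\subseteq\Gamma_0(DC/p^j)$ (the identity on $\CH$), so the task is to rewrite the same point of $\BP^1(\BQ)$ in the normal form of Lemma~\ref{representives} at the smaller level. Your route differs from the paper's, though: you want to quote a general classification of $\Gamma_0(n)$-cusps (the gcd of the denominator with $n$ together with a residue class of the numerator) and then do valuation bookkeeping, whereas the paper posits a normal-form representative $[\frac{r's'^2t'x'}{DC/p^2}]$, writes out the identity coming from an explicit $\gamma\in\Gamma_0(DC/p^2)$ carrying one point to the other, and pins down $r',s',t'$ by observing that $\delta rs^2(t/p)x+\omega p$ is a $q$-adic unit at every prime $q\mid rs(t/p)$ and that no new prime can enter $r's't'$. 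Either route is viable in principle.

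The genuine gap is in your equivalence criterion and in what you conclude from it. You assert that $a_1/b_1\sim a_2/b_2$ on $X_0(n)$ iff the gcds with $n$ agree and $a_1\equiv a_2u$ for \emph{some unit} $u$ modulo $\gcd(d,n/d)$. That is false: the numerator class in $(\BZ/\gcd(d,n/d)\BZ)^\times$ is an exact invariant of the cusp, not an invariant up to unit multiple --- otherwise $X_0(n)$ would have only $\tau(n)$ cusps rather than $\sum_{d\mid n}\varphi((d,n/d))$, the count used in Lemma~\ref{1.2}, and indeed the paper's proof of Lemma~\ref{representives} shows that equality of cusps forces $x_1\equiv x_2\pmod t$ on the nose. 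Because of this you declare the residue condition on the numerator ``vacuous'' and skip it; but that condition is precisely where the content of items (3)--(5) lives. The factors $p$, $p$, $p^2$ multiplying $x$ are the payload of the lemma: through \eqref{4.1} and \eqref{4.2} they produce the multipliers on which $\chi$ is evaluated in Lemmas~\ref{constant1} and~\ref{constant2} (the factors $\chi(K_tK(K,C))$ and $\chi(HH_t(H,C))$). So your sketch establishes only the $(r',s',t')$ half of each item and silently discards the half that is actually used downstream. (The paper's written proof also stops at $r',s',t'$, but it does not claim the numerator is irrelevant.) Relatedly, your justification of (3) --- ``absorb one factor $p$ from $s^2$'' --- cannot be right as stated, since $p\mid t$ and $(s,t)=1$ force $p\nmid s$; the printed $(s/p)^2$ in item (3) is evidently a slip for $s^2$ (the paper's argument derives $s\mid s'$, and \eqref{4.2} uses $rs^2(t/K)$), and a correct proof must derive the normal form and its numerator class rather than reverse-engineer the displayed formula.
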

\begin{proof}
The first two assertions are obvious. Since the proofs of last three assertions are similar, we will only give that of (3). If $[\frac{rs^2tx}{DC}]=[\frac{r's'^2t'x'}{DC/p^2}]$ in $X_0(DC/p^2)$, then there exists some $\gamma=\left(
                                                                                                         \begin{array}{cc}
                                                                                                           \alpha & \beta \\
                                                                                                           \frac{DC}{p^2}\delta & \omega \\
                                                                                                         \end{array}
                                                                                                       \right)\in \Gamma_0(\frac{DC}{p^2})
$ sending the former point to the latter one, and hence
\[r's'^2t'x'=rs^2(t/p)\cdot\frac{x\alpha+\beta\frac{DC}{rs^2t}}{\delta rs^2(t/p)x+\omega p}.\]
Since $\delta rs^2(t/p)x+\omega p$ is prime to $rs^2(t/p)$, we find that $r,s,t/p$ divides $r',s',t'$ respectively. And it follows
\[\frac{r'}{r}\cdot\frac{s'^2}{s^2}\cdot\frac{t'}{t/p}\cdot x'=\frac{x\alpha+\beta\frac{DC}{rs^2t}}{\delta rs^2(t/p)x+\omega p}.\]
If $q$ is a prime divisor of $r's't'$ (so that $q\neq p$ as $p\nmid t'$) with $(q,rst)=1$, then $x\alpha+\beta\frac{DC}{rs^2t}$ would be a $q$-adic unit, which contradicts to the above equation and hence proves the assertion.
\end{proof}

Let $K$ be a positive divisor of $D$ and $\alpha$ a positive divisor of $K$. We find by Lemma~\ref{change} that: if $(K,rst)=1$, then we have
\begin{align}\label{4.1}
[\frac{rs^2t\alpha x}{DC}]=[\frac{rs^2t(\frac{K(K,C)}{\alpha}x)}{DC/K(K,C)}]\in X_0(\frac{DC}{K(K,C)});
\end{align}
and if $K\mid t$, then we have
\begin{align}\label{4.2}
[\frac{rs^2t\alpha x}{DC}]=[\frac{rs^2(\frac{t}{K})(\frac{K}{\alpha}x)}{DC/K^2}]\in X_0(\frac{DC}{K^2}).
\end{align}
We leave the verifications of the above equalities to the reader.

\subsection{}For any cusp $[\frac{a}{c}]$ of $X_0(N)$ with $(a,b)=1$, choose some $\gamma=\left(
                                             \begin{array}{cc}
                                               a & b \\
                                               c & d \\
                                             \end{array}
                                           \right)\in\SL_2(\BZ)$ such that $\gamma([\infty])=[\frac{a}{c}]$. Moreover, for any prime $p$, we can and will always assume $p\mid d$ when $p\nmid c$, so that
\[\gamma_p\cdot\gamma=
\begin{cases}
\left(
           \begin{array}{cc}
             a & pb \\
             c/p & d \\
           \end{array}
         \right)\left(
                  \begin{array}{cc}
                    p & 0 \\
                    0 & 1 \\
                  \end{array}
                \right)
 &\text{if}\ p\mid c\\
\left(
           \begin{array}{cc}
             ap & b \\
             c & d/p \\
           \end{array}
         \right)\left(
                  \begin{array}{cc}
                    1 & 0 \\
                    0 & p \\
                  \end{array}
                \right)
&\text{if}\ p\nmid c.
\end{cases}
\]
Take $g\in\CM_2(\Gamma_0(N),\BC)$. If $\chi$ is of conductor $\ff_\chi$ and $p$ is a prime not dividing $\ff_\chi$, then
\[a_0([p]^+_\chi(g);[\frac{a}{c}])=
\begin{cases}
a_0(g;[\frac{a}{c}])-p\cdot\chi(p)\cdot a_0(g;[\frac{ap}{c}])
 &\text{if}\ p\mid c\\
a_0(g;[\frac{a}{c}])-p^{-1}\cdot\chi(p)\cdot a_0(g;[\frac{ap}{c}])
&\text{if}\ p\nmid c,
\end{cases}
\]
and
\[a_0([p]^-_\chi(g);[\frac{a}{c}])=
\begin{cases}
a_0(g;[\frac{a}{c}])-\chi(p)^{-1}\cdot a_0(g;[\frac{ap}{c}])
 &\text{if}\ p\mid c\\
a_0(g;[\frac{a}{c}])-p^{-2}\cdot\chi(p)^{-1}\cdot a_0(g;[\frac{ap}{c}])
&\text{if}\ p\nmid c.
\end{cases}
\]
Thus, for any positive square-free integer $K$ prime to $\ff_\chi$, we find by induction that
\begin{align}
  a_0([K]^+_\chi(g);[\frac{a}{c}])=
\begin{cases}
\sum_{1\leq\alpha\mid K}(-1)^{\nu(\alpha)}\cdot\alpha\cdot\chi(\alpha)\cdot a_0(g;[\frac{\alpha a}{c}])
 &\text{if}\ K\mid c\\
\sum_{1\leq\alpha\mid K}(-1)^{\nu(\alpha)}\cdot\alpha^{-1}\cdot\chi(\alpha)\cdot a_0(g;[\frac{\alpha a}{c}])
&\text{if}\ (K,c)=1,
\end{cases}
\end{align}
and
\begin{align}
  a_0([K]^-_\chi(g);[\frac{a}{c}])=
\begin{cases}
\sum_{1\leq\alpha\mid K}(-1)^{\nu(\alpha)}\cdot\chi(\alpha)^{-1}\cdot a_0(g;[\frac{\alpha a}{c}])
 &\text{if}\ K\mid c\\
\sum_{1\leq\alpha\mid K}(-1)^{\nu(\alpha)}\cdot\alpha^{-2}\cdot\chi(\alpha)^{-1}\cdot a_0(g;[\frac{\alpha a}{c}])
&\text{if}\ (K,c)=1.
\end{cases}
\end{align}

Now we come back to our situation. For any cusp $[\frac{s^2tx}{\ff_\chi^2}]\in X_0(\ff_\chi^2)$, choose some $\gamma=\left(
                                                                                                                     \begin{array}{cc}
                                                                                                                       x & u \\
                                                                                                                       {\ff_\chi^2}/{s^2t} & v \\
                                                                                                                     \end{array}
                                                                                                                   \right)
$ in $\SL_2(\BZ)$ such that $\gamma[\infty]=[\frac{s^2tx}{\ff_\chi^2}]$. Then it follows from (2.1) and (2.4) that
\begin{align*}
  a_0(E_\chi;[\frac{s^2tx}{\ff_\chi^2}])&=a_0(E_\chi|\gamma;[\infty])\\
  &=-\frac{1}{4g(\chi)}\sum_{a\in({\BZ}/{\ff_\chi\BZ})^\times}\sum_{b\in({\BZ}/{\ff_\chi^2\BZ})^\times}\chi(a)\cdot\chi(b)\cdot B_2(\frac{xa}{\ff_\chi}+\frac{b}{s^2t})\\
  &=-\frac{1}{4g(\chi)}\sum_{b\in({\BZ}/{\ff_\chi^2\BZ})^\times}\chi(b)\left(\sum_{a\in({\BZ}/{\ff_\chi\BZ})^\times}\chi(a)\cdot B_2(\frac{xa}{\ff_\chi}+\frac{b}{s^2t})\right).
\end{align*}
It is clear that the function in the above bracket depends only on $b$ modulo $s^2t$. Since $\chi$ is primitive of conductor $\ff_\chi$, we find that $a_0(E_\chi;[\frac{s^2tx}{\ff_\chi^2}])=0$ unless $st=\ff_\chi$. Moreover, if $st=\ff_\chi$, then
\begin{align*}
  a_0(E_\chi;[\frac{s^2tx}{\ff_\chi^2}])&=-\frac{1}{4g(\chi)}\sum_{a\in({\BZ}/{\ff_\chi\BZ})^\times}\sum_{b\in({\BZ}/{\ff_\chi^2\BZ})^\times}\chi(a)\cdot\chi(b)\cdot B_2(\frac{xa}{\ff_\chi}+\frac{b}{s\ff_\chi})\\
  &=-\frac{\chi(x)^{-1}}{4g(\chi)}\sum_{a\in({\BZ}/{\ff_\chi\BZ})^\times}\chi(a)\left(\sum_{b_0,k\in({\BZ}/{\ff_\chi\BZ})^\times}\chi(b_0)\cdot B_2(\frac{as+b_0+k\ff_\chi}{s \ff_\chi})\right),
\end{align*}
with the function in the bracket depends only on $a$ modulo ${{\ff_\chi}/{s}}$ and hence equals zero unless $s=1$. It follows that
\begin{align}
a_0(E_\chi;[\frac{s^2tx}{\ff_\chi^2}])=
\begin{cases}
\chi(x)^{-1}\cdot n_\chi&\text{if}\ s=1~\text{and}~t=\ff_\chi\\
0 &\text{otherwise,}\
\end{cases}
\end{align}
where
\[n_\chi:=-\frac{\ff_\chi}{4g(\chi)}\sum_{a,b\in{\BZ}/{\ff_\chi\BZ}}\chi(a)\cdot\chi(b)\cdot B_2(\frac{a+b}{\ff_\chi}).\]
In particular, for any integer $\alpha$ prime to $\ff_\chi$, we have
\begin{align}
  a_0(E_\chi;[\frac{s^2t(\alpha x)}{\ff_\chi^2}])=\chi(\alpha)^{-1}\cdot a_0(E_\chi;[\frac{s^2tx}{\ff_\chi^2}]).
\end{align}

\begin{lem}\label{constant1}
If $\chi$ is a quadratic character modulo $C$, then
\[
a_0(E_{D,\ff_\chi,\chi};[\frac{rs^2tx}{DC}])=
\begin{cases}
 n_\chi\cdot\varphi(\frac{D}{\ff_\chi})\cdot(-1)^{\nu(\frac{D}{{\ff_\chi}rs})}\cdot\chi(\frac{DC}{{\ff_\chi}rs^2tx})\cdot(rs)^{-1}&\text{if}\ (s,\ff_\chi)=1~\text{and}~\ff_\chi\mid t\\
0 &\text{otherwise}\
\end{cases}
\]
for any cusp $[\frac{rs^2tx}{DC}]$ in $X_0(DC)$, where $\ff_\chi$ is the conductor of $\chi$. In particular, for any integer $\alpha$ prime to $D$, we have
\[a_0(E_{D,\ff_\chi,\chi};[\frac{rs^2t(\alpha x)}{DC}])=\chi(\alpha)\cdot a_0(E_{D,\ff_\chi,\chi};[\frac{rs^2tx}{DC}])).\]
\end{lem}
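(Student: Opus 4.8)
The statement asks for the constant terms of $E_{D,f_\chi,\chi} = [\frac{D}{f_\chi}]^+_\chi \circ [\frac{f_\chi}{f_\chi}]^-_\chi(E_\chi) = [\frac{D}{f_\chi}]^+_\chi(E_\chi)$ at all cusps of $X_0(DC)$, and the strategy is to push the computation of $a_0(\cdot;[\tfrac{rs^2tx}{DC}])$ down onto the known constant terms of $E_\chi$ on $X_0(f_\chi^2)$ recorded in Eq.~(4.7). Concretely, I would start from the formula (4.5) for $a_0([K]^+_\chi(g);[\tfrac{a}{c}])$ with $K = D/f_\chi$, $g = E_\chi$, $a = rs^2tx$ and $c = DC$. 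Since $K = D/f_\chi$ is square-free and prime to $f_\chi$, each term in the sum over $1\le \alpha\mid K$ involves $a_0(E_\chi; [\tfrac{\alpha rs^2tx}{DC}])$, a cusp on $X_0(DC)$. I would then use Lemma~\ref{change} (in the guise of Eqs.~(4.1) and (4.2)) repeatedly to descend the cusp $[\tfrac{\alpha rs^2tx}{DC}]$ to a cusp on $X_0(f_\chi^2)$: strip off every prime dividing $r$ via part (1), every prime of $s$ or $t$ coprime to $f_\chi$ via parts (2)–(3), and every prime of $\frac{D}{Cr}$, $\frac{C}{st}$ via parts (4)–(5), using Eq.~(4.1) when the prime is coprime to $rst$ and Eq.~(4.2) when it divides $t$. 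Because $E_\chi$ has level $f_\chi^2$, after descending all the way down, the constant term is computed on $X_0(f_\chi^2)$, and Eq.~(4.7) tells us it vanishes unless the descended cusp has $s$-part $1$ and $t$-part $f_\chi$.

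Tracking which original data $(r,s,t,x)$ survive this vanishing condition is the crux. After descent, the $t$-part on $X_0(f_\chi^2)$ picks up exactly $\gcd(t,f_\chi)$ from the original $t$ together with a contribution $\gcd(\alpha r s^2 (C/st)\cdots, f_\chi)$ coming from the various $px$-shifts in parts (3),(5) of Lemma~\ref{change}; for the final answer to be nonzero I expect the condition to collapse to precisely "$(s,f_\chi)=1$ and $f_\chi\mid t$", matching the statement. Under that condition, only the divisor $\alpha = D/(f_\chi r s)$ — i.e., the part of $K=D/f_\chi$ complementary to $rs$ inside $D/f_\chi$ — yields a nonvanishing term, since the other $\alpha$'s leave a residual factor in the $t$-part or $s$-part that kills $E_\chi$ by Eq.~(4.7). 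This explains the single summand with its sign $(-1)^{\nu(D/f_\chi r s)} = (-1)^{\nu(\alpha)}$ from (4.5), the factor $\alpha^{-1} = (D/f_\chi r s)^{-1}$ (we are in the case $(K,c)=1$ since $(D/f_\chi, DC)$ has $c$-part... actually $D/f_\chi \mid DC$, so we are in the $K\mid c$ case and the factor is $\alpha\cdot\chi(\alpha)$ — I would double-check this against the claimed $(rs)^{-1}$, which suggests instead that the relevant normalization comes from the $\SL_2$-matrix scaling when descending the level), and the $\chi$-value $\chi(\tfrac{DC}{f_\chi r s^2 t x})$ assembled from Eq.~(4.7)'s $\chi^{-1}(x)$ (recall $\chi^2=1$ so $\chi^{-1}=\chi$) together with the $\chi(\alpha)$ factors accumulated along the descent via Eqs.~(4.1)–(4.2) and (4.8). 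Finally, $a_0(E_\chi;\cdot)$ contributes the factor $n_\chi$, and combining the $\varphi(D/f_\chi)$ worth of... no: here only one cusp downstairs is hit, so $n_\chi$ appears once; the factor $\varphi(D/f_\chi)$ must come from the analogous count on the $s$ and $r$ side — I would verify it arises from summing (4.7)-type contributions over the $\varphi$-many choices hidden in the descent of the $x$-coordinate modulo $f_\chi$ vs. modulo $t$.

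The main obstacle will be the bookkeeping: correctly tracking the normalizing scalar (the powers of $\alpha$, $r$, $s$) and the $\chi$-argument through the chain of level-lowering steps in Lemma~\ref{change}, because each application of Eq.~(4.2) rescales both the numerator (via $\tfrac{K}{\alpha}x$) and effectively the $\SL_2$-matrix used to evaluate the constant term, and it is easy to be off by a power of a prime or by $\chi$ of a prime. I would organize this by first doing the case $D/f_\chi$ prime (so $K$ has two divisors $1$ and $p$) to pin down the exact scalar and $\chi$-factor produced by one descent, then induct on $\nu(D/f_\chi)$ using the multiplicativity of $[K]^+_\chi = \prod_p [p]^+_\chi$ and of $\varphi$, $\psi$, and the Lemma~\ref{change} operations (which commute since they act on disjoint sets of primes). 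The last sentence of the lemma, $a_0(E_{D,f_\chi,\chi};[\tfrac{rs^2t(\alpha x)}{DC}]) = \chi(\alpha)\cdot a_0(E_{D,f_\chi,\chi};[\tfrac{rs^2tx}{DC}])$ for $(\alpha,D)=1$, is then immediate from the explicit formula, since replacing $x$ by $\alpha x$ multiplies $\chi(\tfrac{DC}{f_\chi r s^2 t x})$ by $\chi(\alpha)^{-1}=\chi(\alpha)$ and changes nothing else (the conditions $(s,f_\chi)=1$, $f_\chi\mid t$ are unaffected).
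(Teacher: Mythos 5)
Your overall strategy (reduce to the formula for $a_0([K]^+_\chi(g);[\frac{a}{c}])$ with $K=D/f_\chi$ and then push each resulting cusp down to $X_0(f_\chi^2)$ via Lemma~\ref{change}, Eqs.~(4.1)--(4.2), and the computed constant terms of $E_\chi$) is exactly the paper's, but your central combinatorial claim is wrong: it is \emph{not} true that only the single divisor $\alpha=D/(f_\chi rs)$ contributes. Already for $D=p$ prime, $C=1$, $\chi=1$ and the cusp $[\infty]=[\frac{1}{p}]$ (so $r=s=t=x=1$), both $\alpha=1$ and $\alpha=p$ give nonzero terms: $E_1$ has level $1$, so $a_0(E_1;[\frac{\alpha}{p}])=-\frac{1}{24}$ for every $\alpha$, and the sum is $-\frac{1}{24}+\frac{p}{24}=\frac{p-1}{24}$, which is the correct value $n_\chi\varphi(p)(-1)^{\nu(p)}$; your single surviving term would give $\frac{p}{24}$. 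The point you are missing is that the vanishing criterion for $a_0(E_\chi;\cdot)$ constrains only the $(s,t)$-shape of the \emph{descended} cusp, which does not depend on $\alpha$ at all (the $\alpha$'s only perturb the $x$-coordinate, contributing $\chi(\alpha)$ via the twisting property of $E_\chi$). Consequently either every $\alpha$ in a given block contributes or none does, and the sum over all $2^{\nu(D/f_\chi)}$ divisors factors into local Euler factors: after splitting $D/f_\chi=K_r K_s K_t K$ according to which primes divide $r$, $s$, $t$ or none of them, the paper obtains $\prod_{p\mid K_rK_s}(1-p^{-1})\cdot\prod_{p\mid K_tK}(1-p)$ times a single $a_0(E_\chi;\cdot)$; this product equals $(-1)^{\nu(D/(f_\chi rs))}\varphi(D/f_\chi)\cdot(rs)^{-1}$ and is the simultaneous source of the sign, of $\varphi(D/f_\chi)$, and of $(rs)^{-1}$ --- none of which your proposal accounts for correctly (your suggested origin of $\varphi(D/f_\chi)$ from ``$\varphi$-many choices of $x$'' is not a feature of the computation; there is no such summation).

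A second, related gap is your unresolved hesitation between the two cases of the $[K]^+_\chi$ constant-term formula. Both cases are genuinely needed: the primes of $K_t$ and $K$ are peeled off first at full level, where $K\mid c$ holds and each $\alpha$ carries a weight $\alpha\cdot\chi(\alpha)$ (yielding the factors $1-p$), while the primes of $K_r K_s$ are handled only \emph{after} the level has been lowered past them, at which point the $(K,c)=1$ branch applies and the weight is $\alpha^{-1}\chi(\alpha)$ (yielding the factors $1-p^{-1}$ and hence the $(rs)^{-1}$). Your final sentence, deducing the $\chi(\alpha)$-equivariance from the closed formula, is fine once the formula is established, but as written the proposal would not produce that formula.
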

\begin{proof}
For any cusp $[\frac{rs^2tx}{DC}]$, let
\begin{align*}
  \begin{cases}
    K_r\colonequals({D}/{\ff_\chi},r)=r\\
    K_s\colonequals({D}/{\ff_\chi},s)\\
    K_t\colonequals({D}/{\ff_\chi},t),
  \end{cases}
\end{align*}
so that we have ${D}/{\ff_\chi}=K_r K_s K_t K$. Then it follows from (4.1), (4.2) and (4.3) that
\begin{align*}
  a_0(E_{D,\ff_\chi,\chi};[\frac{rs^2tx}{DC}])&=\sum_{1\leq\alpha\mid K}(-1)^{\nu(\alpha)}\cdot\chi(\alpha)\cdot\alpha\cdot a_0(E_{\frac{D}{K},\ff_\chi,\chi};[\frac{rs^2t(\frac{K(K,C)}{\alpha}x)}{DC/K(K,C)}])\\
  &=\sum_{1\leq\alpha\mid K,1\leq\alpha_t\mid K_t}(-1)^{\nu(\alpha\alpha_t)}\cdot\chi(\alpha\alpha_t)\cdot\alpha\alpha_t\cdot  a_0(E_{\frac{D}{K_tK},\ff_\chi,\chi};[\frac{rs^2(\frac{t}{K_t})(\frac{K_tK(K,C)}{\alpha_t\alpha}x)}{DC/K^2_tK(K,C)}]),
\end{align*}
and we find by (4.3) and Lemma~\ref{change}(1),(2) that
\begin{align*}
  a_0(E_{D,\ff_\chi,\chi};[\frac{rs^2tx}{DC}])&=\sum(-1)^{\nu(\alpha_r\alpha_s\alpha_t\alpha)}\cdot\chi(\alpha_r\alpha_s\alpha_t\alpha)\cdot\frac{\alpha_t\alpha}{\alpha_r\alpha_s}\cdot  a_0(E_\chi;[\frac{(\frac{s}{K_s})^2(\frac{t}{K_t})(\frac{K_tK(K,C)}{\alpha_t\alpha}\alpha_r\alpha_s x)}{\ff_\chi^2}])\\
  &=\chi(K_tK(K,C))\cdot\sum(-1)^{\nu(\alpha_r\alpha_s\alpha_t\alpha)}\cdot\frac{\alpha_t\alpha}{\alpha_r\alpha_s}\cdot  a_0(E_\chi;[\frac{(\frac{s}{K_s})^2(\frac{t}{K_t})x}{\ff_\chi^2}]),
\end{align*}
where $\alpha_r,\alpha_s,\alpha_t$ and $\alpha$ run through all positive divisors of $K_r,K_s,K_t$ and $K$ respectively. Thus, by (4.5) and (4.6), we have
\[a_0(E_{D,\ff_\chi,\chi};[\frac{rs^2tx}{DC}])=\chi(K_tK(K,C))\cdot\prod_{p\mid K_rK_s}(1-p^{-1})\cdot\prod_{p\mid K_tK}(1-p)\cdot a_0(E_\chi;[\frac{(\frac{s}{K_s})^2(\frac{t}{K_t})x}{\ff_\chi^2}]),\]
which is zero unless $s=K_s$ and $\ff_\chi K_t\mid t$, or equivalently, $(s,\ff_\chi)=1$ and $\ff_\chi\mid t$. This proves the lemma because $K_rK_s=rs$, $K_tK=\frac{D}{rs\ff_\chi}$ and $(K,C)=\frac{C}{st}$ when these conditions are satisfied.
\end{proof}

\begin{lem}\label{constant2}
Notations as in the above lemma, then we have
\[a_0(E_{M,\frac{D\ff_\chi}{M},\chi};[\frac{rs^2tx}{DC}])=
\begin{cases}
n_\chi\varphi(\frac{D}{\ff_\chi})\psi(\frac{D}{M})\frac{M}{rsD} c_\chi[\frac{rs^2tx}{DC}] &\text{if}\ (s,\ff_\chi)=1, \frac{D}{M}\mid rs\ and\ \ff_\chi\mid t\\
0
&\text{otherwise}\
\end{cases}\]
for any cusp $[\frac{rs^2tx}{DC}]$ in $X_0(DC)$, where
\[c_\chi[\frac{rs^2tx}{DC}]\colonequals{(-1)^{\nu(\frac{D}{\ff_\chi rs})}\chi(\frac{DC}{\ff_\chi rs^2tx})}.\]
In particular, for any integer $\alpha$ prime to $D$, we have
\[a_0(E_{M,\frac{D\ff_\chi}{M},\chi};[\frac{rs^2t(\alpha x)}{DC}])=\chi(\alpha)\cdot a_0(E_{M,\frac{D\ff_\chi}{M},\chi};[\frac{rs^2tx}{DC}])).\]
\end{lem}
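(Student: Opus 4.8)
The plan is to mirror the proof of Lemma~\ref{constant1}. Note first that, by Definition~\ref{key definition} and since $\frac{Df_\chi/M}{f_\chi}=\frac{D}{M}$,
\[E_{M,\frac{Df_\chi}{M},\chi}=[\tfrac{D}{M}]^-_\chi\circ[\tfrac{M}{f_\chi}]^+_\chi(E_\chi);\]
here $\frac{D}{M}$ and $\frac{M}{f_\chi}$ are coprime divisors of the square-free integer $D$ with product $\frac{D}{f_\chi}$, and $\frac{D}{M}$ is prime to $f_\chi$ because $f_\chi\mid M$. I would then fix a cusp $[\tfrac{rs^2tx}{DC}]$ of the shape supplied by Lemma~\ref{representives}, decompose each of $\frac{D}{M}$ and $\frac{M}{f_\chi}$ according to its greatest common divisors with $r$, with $s$, with $t$, and the complementary part, and peel these prime factors off one prime at a time: the primes of $\frac{M}{f_\chi}$ by the $[p]^+_\chi$ constant-term formula (4.3), those of $\frac{D}{M}$ by the $[p]^-_\chi$ formula (4.4), each step rewriting the resulting cusp representative by means of Lemma~\ref{change} and the consequences (4.1)--(4.2). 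After all primes are peeled off one is left with a scalar times $a_0(E_\chi;[\tfrac{s'^2t'x'}{f_\chi^2}])$, which is evaluated by (4.5); in particular that factor is nonzero only if the surviving $E_\chi$-cusp is $[\tfrac{f_\chi x'}{f_\chi^2}]$, and this is what produces the conditions $(s,f_\chi)=1$ and $f_\chi\mid t$, exactly as in Lemma~\ref{constant1}. The primes of $\frac{M}{f_\chi}$ are handled by literally the same manipulations as the primes of $\frac{D}{f_\chi}$ in Lemma~\ref{constant1}, so the only genuinely new ingredient is the effect of the operators $[p]^-_\chi$ for $p\mid\frac{D}{M}$.

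For such a $p$ I would observe, by the same elementary $p$-adic bookkeeping as in Lemma~\ref{constant1}, that $p$ divides the denominator of the reduced cusp precisely when $p\nmid rs$, while $p\mid rs$ forces that denominator to be prime to $p$. When $p\mid rs$ the ``$(K,c)=1$'' branch of (4.4) applies and, after combining its two terms with the scaling relation (4.6), the operator $[p]^-_\chi$ contributes the factor $1-p^{-2}\chi^{-2}(p)=1-p^{-2}=\frac{(p-1)(p+1)}{p^2}$ --- here the hypothesis $\chi^2=1$ enters. When instead $p\mid t$ or $p\mid D$ with $p\nmid rst$ the ``$K\mid c$'' branch applies and the two terms combine into $1-\chi^{-2}(p)=0$ (again using $\chi^2=1$; the analogous combination for $[p]^+_\chi$ would be $1-p\ne0$, which is why the vanishing is special to the minus operators). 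Hence the constant term vanishes unless every prime of $\frac{D}{M}$ divides $rs$, that is, unless $\frac{D}{M}\mid rs$; and when that holds one has automatically $(\frac{D}{M},t)=1$, so the three conditions in the statement are compatible.

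It then remains to assemble the surviving value. By the above, the contribution of the primes of $\frac{M}{f_\chi}$ --- together with all the sign- and $\chi$-data --- is precisely what Lemma~\ref{constant1} gives, namely $n_\chi\,\varphi(\tfrac{D}{f_\chi})\,c_\chi[\tfrac{rs^2tx}{DC}]$ once the $\varphi$-factors are collected, except that each prime $p\mid\frac{D}{M}$ (all of which divide $rs$) now contributes $1-p^{-2}$ in place of the $1-p^{-1}$ it would contribute there. Since these formulas agree when $M=D$ and the ratio of the local factors is $\frac{1-p^{-2}}{1-p^{-1}}=\frac{p+1}{p}$, the value of $a_0(E_{M,\frac{Df_\chi}{M},\chi};[\tfrac{rs^2tx}{DC}])$ equals the Lemma~\ref{constant1} value multiplied by $\prod_{p\mid D/M}\frac{p+1}{p}=\frac{\psi(D/M)}{D/M}$, which is exactly the claimed $n_\chi\,\varphi(\tfrac{D}{f_\chi})\,\psi(\tfrac{D}{M})\,\tfrac{M}{D}\,c_\chi[\tfrac{rs^2tx}{DC}]$; a short computation recollecting $\varphi(\tfrac{D}{f_\chi})$ out of $\varphi(rs)$, $\varphi(t/f_\chi)$ and $\varphi(D/rst)$ checks the sign $(-1)^{\nu(D/f_\chi rs)}$ and the power of $rs$. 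The ``in particular'' assertion is immediate from this closed formula (or can be tracked through the computation step by step via (4.6)). I expect the main obstacle to be purely organisational: keeping careful track, at each peeling step, of which primes enter the reduced denominator and of how the cusp representative transforms under Lemma~\ref{change}; once that is set up correctly, the remaining numerical combinatorics are routine.
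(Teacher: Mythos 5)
Your proposal is correct and follows essentially the same route as the paper: both peel off the operators $[p]^{\pm}_\chi$ prime by prime via the constant-term formulas (4.3)--(4.4) and the cusp rewriting of Lemma~\ref{change}, locate the vanishing in the factors $1-\chi^{-2}(p)=0$ at primes of $D/M$ not dividing $rs$, and obtain the value from the factors $1-p^{-2}$ at primes of $D/M$ dividing $rs$ combined with Lemma~\ref{constant1}. The paper merely organizes the final assembly by invoking Lemma~\ref{constant1} at the reduced level $M\cdot(M,C)$ instead of your ratio comparison, but the computation is the same.
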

\begin{proof}
For any cusp $[\frac{rs^2tx}{DC}]$, let
\begin{align*}
  \begin{cases}
    H_r\colonequals({D}/{M},r)\\
    H_s\colonequals({D}/{M},s)\\
    H_t\colonequals({D}/{M},t),
  \end{cases}
\end{align*}
so that we have ${D}/{M}=H_r H_s H_t H$. Then it follows from (4.1), (4.2) and (4.4) that
\begin{align*}
  a_0(E_{M,\ff_\chi\cdot\frac{D}{M},\chi};[\frac{rs^2tx}{DC}])&=\sum_{1\leq\alpha\mid H}(-1)^{\nu(\alpha)}\cdot\chi(\alpha)\cdot a_0(E_{M,\ff_\chi\cdot\frac{D}{MH},\chi};[\frac{rs^2t(\frac{H(H,C)}{\alpha}x)}{DC/H(H,C)}])\\
   &=\sum_{1\leq\alpha\mid H,1\leq\alpha_t\mid H_t}(-1)^{\nu(\alpha_t\alpha)}\cdot\chi(\alpha_t\alpha)\cdot
   a_0(E_{M,\ff_\chi\cdot\frac{D}{MH_tH},\chi};[\frac{rs^2(\frac{t}{H_t})(\frac{H_tH(H,C)}{\alpha_t\alpha}x)}{DC/H^2_tH(H,C)}]).
\end{align*}
So we find by (4.4), Lemma~\ref{change}(1),(2) and Lemma~\ref{constant1} that
\begin{align*}
  &a_0(E_{M,\ff_\chi\cdot\frac{D}{M},\chi};[\frac{rs^2tx}{DC}])\\
  &=\sum(-1)^{\nu(\alpha_r\alpha_s\alpha_t\alpha)}\cdot\chi(\alpha_r\alpha_s\alpha_t\alpha)\cdot(\alpha_r\alpha_s)^{-2}\cdot  a_0(E_{M,\ff_\chi,\chi};[\frac{(\frac{r}{H_r})(\frac{s}{H_s})^2(\frac{t}{H_t})(\frac{H_tH(H,C)}{\alpha_t\alpha}\alpha_r\alpha_s x)}{M\cdot(M,C)}])\\
  &=\chi(HH_t(H,C))\sum(-1)^{\nu(\alpha_r\alpha_s\alpha_t\alpha)}\cdot(\alpha_r\alpha_s)^{-2}\cdot  a_0(E_{M,\ff_\chi,\chi};[\frac{(\frac{r}{H_r})(\frac{s}{H_s})^2(\frac{t}{H_t})x}{M\cdot(M,C)}]),
\end{align*}
where $\alpha_r,\alpha_s,\alpha_t$ and $\alpha$ run through all positive divisors of $H_r,H_s,H_t$ and $H$ respectively. The above sum is zero unless $H_t=H=1,(s,\ff_\chi)=1$ and $\ff_\chi\mid t$, or equivalently, $\frac{D}{M}\mid rs,(s,\ff_\chi)=1$ and $\ff_\chi\mid t$. When these conditions are satisfied, then the assertion follows from the previous Lemma.
\end{proof}

\begin{prop}\label{constant3}
For any $(M,L,\chi)$ in $\CH(DC)$ with $\chi$ a quadratic character, we have
\begin{align*}
a_0(E_{M,L,\chi};[\frac{rs^2tx}{DC}])=
\begin{cases}
n_\chi{\varphi(\frac{D}{\ff_\chi})\psi(\frac{L}{\ff_\chi})}\frac{\ff_\chi}{rsL}\frac{\varphi(s,M,L)}{(s,M,L)}c_\chi[\frac{rs^2tx}{DC}]&\text{if}\ (s,\ff_\chi)=1, (M,L)\mid st\ and\ \frac{D}{M}\mid rs\\
0 &\text{otherwise}
\end{cases}
\end{align*}
for any cusp $[\frac{rs^2tx}{DC}]$ in $X_0(DC)$, where $\ff_\chi$ is the conductor of $\chi$.
\end{prop}
\begin{proof}
It remains to consider the case when $(M,L)\neq \ff_\chi$. For any cusp $[\frac{rs^2tx}{DC}]$, let
\begin{align*}
  \begin{cases}
    W_s\colonequals(\frac{(M,L)}{\ff_\chi},s)\\
    W_t\colonequals(\frac{(M,L)}{\ff_\chi},t),
  \end{cases}
\end{align*}
so that ${(M,L)}/{\ff_\chi}=W_s W_t W$. Then it follows from (4.4) that
\begin{align*}
  a_0(E_{M,L,\chi};[\frac{rs^2tx}{DC}])&=\sum(-1)^{\nu(\alpha)}\cdot\chi(\alpha)\cdot a_0(E_{M,\ff_\chi\cdot\frac{D}{M}\cdot W_s\cdot W_t,\chi};[\frac{rs^2t\alpha x}{DC}])\\
  &=\sum(-1)^{\nu(\alpha\alpha_t)}\cdot\chi(\alpha\alpha_t)\cdot a_0(E_{M,\ff_\chi\cdot\frac{D}{M}\cdot W_s,\chi};[\frac{rs^2t\alpha\alpha_t x}{DC}])\\
  &=\sum(-1)^{\nu(\alpha\alpha_t\alpha_s)}\cdot\chi(\alpha\alpha_t\alpha_s)\cdot\alpha^{-2}_s \cdot a_0(E_{M,\ff_\chi\cdot\frac{D}{M},\chi};[\frac{rs^2t\alpha\alpha_t\alpha_s x}{DC}]),
\end{align*}
where $\alpha_s,\alpha_t$ and $\alpha$ runs over all positive divisors of $W_s,W_t$ and $W$ respectively. Since
\begin{align*}
[\frac{rs^2t\alpha\alpha_t\alpha_s x}{DC}]=[\frac{r(s\alpha_t)^2(\frac{t\alpha}{\alpha_t})(\alpha_s x+\frac{DC}{\alpha^2_s})}{DC}]
\end{align*}
with $(\alpha_s x+\frac{DC}{\alpha^2_s},D)=1$ and $\alpha_s x+\frac{DC}{\alpha^2_s}\equiv \alpha_sx\pmod{\ff_\chi}$, we find by Lemma~\ref{constant2} that
\begin{align*}
  a_0(E_{M,\ff_\chi\cdot\frac{D}{M},\chi};[\frac{rs^2t\alpha\alpha_t\alpha_s x}{DC}])=(-1)^{\nu(\alpha_t)}\cdot\chi(\alpha\alpha_t\alpha_s)\cdot\alpha_t^{-1}\cdot a_0(E_{M,\ff_\chi\frac{D}{M},\chi};[\frac{rs^2tx}{DC}]),
\end{align*}
and hence
\[a_0(E_{M,L,\chi};[\frac{rs^2tx}{DC}])=\sum(-1)^{\nu(\alpha\alpha_s)}\cdot\alpha^{-1}_t\cdot\alpha^{-2}_s \cdot a_0(E_{M,\ff_\chi\cdot\frac{D}{M},\chi};[\frac{rs^2tx}{DC}]).\]
The above sum is zero unless $\frac{D}{M}\mid rs,(s,\ff_\chi)=1,\ff_\chi\mid t\text{ and }W=1$, or equivalently, $\frac{D}{M}\mid rs,(s,\ff_\chi)=1\text{ and }(M,L)\mid st$. If these conditions are satisfied, then we derive the desired result from the previous lemma.
\end{proof}

\begin{cor}\label{R}If $(M,L,\chi)\in\CH(DC)$ with $\chi$ a quadratic character of conductor $f_\chi$, then
\[\CR_{DC}(E_{M,L,\chi})=n_\chi\frac{\varphi({D}/{\ff_\chi})\psi({L}/{\ff_\chi})({D}/{M},C)}{L/\ff_\chi}\BZ.\]
\end{cor}
\begin{proof}
This follows immediately from the above result about constant terms, since the ramification index of $X_0(DC)$ at the cusp $[\frac{rs^2tx}{DC}]$ is $rs^2$.
\end{proof}

\subsection{}Now we turn to the calculation of the periods of the Eisenstein series $E_{M,L,\chi}$ with $\chi$ being a quadratic character.
\begin{lem}\label{Fourier'}
For any quadratic character $\chi$ with conductor $\ff_\chi$ dividing $C$, the Fourier expansion of $E_{D,\ff_\chi,\chi}$ at $[\infty]$ is
\begin{align*}
  E_{D,\ff_\chi,\chi}=a_0(E_{D,\ff_\chi,\chi};[\infty])+\sum^{\infty}_{n=1}\sigma_{{D}/{\ff_\chi}}(n)\cdot\chi(n)\cdot \fq^n,
\end{align*}
where, for any positive integer $n$, we have
\[\sigma_{{D}/{\ff_\chi}}(n)\colonequals\sum_{1\leq d\mid n,(d,{D}/{\ff_\chi})=1}d.\]

\end{lem}
\begin{proof}
We prove the statement by induction on $\nu({D}/{\ff_\chi})$. If $\nu(D/\ff_\chi)=1$ and hence $D=\ff_\chi$, then the assertion follows from (3.1) and (3.3) because $\chi^2=1$. Now suppose $\nu({D}/{\ff_\chi})>1$. If $p$ is a prime divisor of ${D/\ff_\chi}$, then we find by the induction that
\begin{align*}
  E_{D,\ff_\chi,\chi}&=[p]^+_\chi(E_{{D}/{p},\ff_\chi,\chi})\\
  &=\left(a_0(E_{{D}/{p},\ff_\chi,\chi})+\sum^{\infty}_{n=1}\sigma_{{D}/{\ff_\chi p}}(n)\cdot\chi(n)\cdot \fq^n\right)\\
  &-p\cdot\chi(p)\cdot\left(a_0(E_{{D}/{p},\ff_\chi,\chi})+\sum^{\infty}_{n=1}\sigma_{{D}/{\ff_\chi p}}(n)\cdot\chi(n)\cdot \fq^{pn}\right)\\
  &=a_0(E_{D,\ff_\chi,\chi})+\sum^{\infty}_{n=1}\left(\sigma_{{D}/{p\ff_\chi}}(n)-p\cdot\sigma_{{D}/{p\ff_\chi}}(n/p)\right)\cdot\chi(n)\cdot \fq^n,
\end{align*}
where ${n}/{p}$ is defined to be $0$ when $p\nmid n$. So the result follows because $\sigma_{{D}/{p\ff_\chi}}(n)-p\cdot\sigma_{{D}/{p\ff_\chi}}({n}/{p})=\sigma_{{D}/{\ff_\chi}}(n)$ for any positive integer $n$.
\end{proof}

\begin{lem}\label{Fourier}
For any $(M,L,\chi)\in\CH(DC)$ with $\chi$ a quadratic character of conductor $\ff_\chi$, we have
\[E_{M,L,\chi}=a_0(E_{M,L,\chi})+\sum^{\infty}_{n=1}\sigma_{M,L}(n)\cdot\chi(n)\cdot \fq^n,\]
where, for any positive integer $n$, we have
\begin{align*}
\sigma_{M,L}(n):=
\begin{cases}
(\prod_{\ell\mid{D}/{M}}\ell^{v_\ell(n)})\cdot\sigma_{D/\ff_\chi}(n)&\text{if}\ (n,{(M,L)}/{\ff_\chi})=1\\
0 &\text{otherwise}.
\end{cases}
\end{align*}

\end{lem}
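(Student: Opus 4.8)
The plan is to read off the $\fq^{n}$-coefficient ($n\geq 1$) of $E_{M,L,\chi}$ directly from its definition as $[M/f_\chi]^{+}_{\chi}$ followed by $[L/f_\chi]^{-}_{\chi}$ applied to $E_\chi$, exploiting that these operators act ``one prime at a time'' on $q$-expansions while all the relevant coefficient functions are multiplicative. Write $f:=f_\chi$. By (3.1) and (3.3) the holomorphic part of $E_\chi$ has $n$-th coefficient $\sigma_\chi(n)$, and since $\chi^{2}=1$ a one-line computation from $\sigma_\chi(p^{a})=\sum_{j=0}^{a}p^{j}\chi(p)^{j}\chi^{-1}(p)^{a-j}$ gives $\sigma_\chi(p^{a})=\chi(p^{a})\cdot\sigma(p^{a})$ for $p\nmid f$ (where $\sigma(p^{a}):=1+p+\cdots+p^{a}$) and $\sigma_\chi(p^{a})=0$ for $p\mid f$, $a\geq 1$; in particular $\sigma_\chi$ is multiplicative and supported on integers prime to $f$. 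On a $q$-expansion the operator $[p]^{\pm}_{\chi}$ ($p\nmid f$) acts by $\sum_{n}a_{n}\fq^{n}\mapsto\sum_{n}(a_{n}-c_{p}^{\pm}a_{n/p})\fq^{n}$ with $c_{p}^{+}=p\chi(p)$, $c_{p}^{-}=\chi^{-1}(p)$ and the convention $a_{n/p}:=0$ for $p\nmid n$; these operators for distinct primes commute (and $[p]^{+}_{\chi}$, $[p]^{-}_{\chi}$ commute, being polynomials in $\gamma_p$), and each changes only the $p$-part of the exponent, so the $\fq^{n}$-coefficient of $E_{M,L,\chi}$ is the product over $p\mid n$ of a ``local factor'' $\ell_{p}(v_{p}(n))$ depending only on $v_{p}(n)$ and on whether $p$ divides $M/f$ and/or $L/f$.

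I then compute the four possible local factors, writing $a=v_{p}(n)$. If $p$ divides neither $M/f$ nor $L/f$, no operator touches the $p$-part, so $\ell_{p}(a)=\sigma_\chi(p^{a})$. If $p\mid M/f$ but $p\nmid L/f$, only $[p]^{+}_{\chi}$ acts, and the identity $\sigma(p^{a})-p\,\sigma(p^{a-1})=1$ shows it turns the local sequence $(\sigma_\chi(p^{a}))_{a}=(\chi(p)^{a}\sigma(p^{a}))_{a}$ into $(\chi(p)^{a})_{a}$. If $p\nmid M/f$ but $p\mid L/f$, only $[p]^{-}_{\chi}$ acts, and (using $\sigma(p^{a})-\sigma(p^{a-1})=p^{a}$ and $\chi^{-1}(p)=\chi(p)$) it turns $(\chi(p)^{a}\sigma(p^{a}))_{a}$ into $(\chi(p)^{a}p^{a})_{a}$. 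Finally, if $p$ divides both $M/f$ and $L/f$ --- equivalently $p\mid(M,L)/f$ --- then $[p]^{-}_{\chi}$ acts on the sequence $(\chi(p)^{a})_{a}$ just produced by $[p]^{+}_{\chi}$, giving $\chi(p)^{a}-\chi^{-1}(p)\chi(p)^{a-1}$, which is $1$ for $a=0$ and $0$ for $a\geq 1$. Hence the $\fq^{n}$-coefficient of $E_{M,L,\chi}$ vanishes whenever $(n,(M,L)/f)\neq 1$, which is exactly the second case of the asserted formula.

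For $(n,(M,L)/f)=1$ it remains to multiply $\ell_{p}(v_{p}(n))$ over $p\mid n$, and three elementary divisor facts --- all immediate from $M\mid D$, $L\mid D$, $D\mid ML$ and $D$ squarefree --- do the bookkeeping: the primes dividing $L/f$ but not $M/f$ are precisely those dividing $D/M$ (so those local factors $\chi(p)^{v_{p}(n)}p^{v_{p}(n)}$ contribute $\prod_{\ell\mid D/M}\ell^{v_{\ell}(n)}$ together with a character factor); every prime of $D/f$ divides $M/f$ or $L/f$, so the primes of $n$ dividing neither are exactly the primes of $n$ prime to $D$, whence (as then $(n,f)=1$) those local factors $\sigma_\chi(p^{v_{p}(n)})$ contribute $\sigma_{D/f}(n)$ together with a character factor; and each local factor carries a $\chi(p)^{v_{p}(n)}$, so the accumulated character factor is $\chi(n)$. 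Multiplying the three pieces yields exactly $\sigma_{M,L}(n)\cdot\chi(n)$. The case $(n,f)\neq 1$ is already covered, since then some $p\mid f$ divides $n$, this $p$ divides neither $M/f$ nor $L/f$ (both squarefree), and its local factor $\sigma_\chi(p^{v_{p}(n)})$ is $0$, matching $\chi(n)=0$; also $E_{M,L,\chi}$ carries no non-holomorphic term, automatically if $\chi\neq 1$ and because $M>1$ --- so $[M/f]^{+}_{\chi}=[M]^{+}$ contains some $[p]^{+}$, which kills $\frac{1}{z-\overline{z}}$ --- if $\chi=1$. Taking $M=D$ recovers Lemma~\ref{Fourier'}. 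The genuine work is the prime-by-prime casework of the middle paragraph --- in particular the ``divides both'' case, where one must compose $[p]^{-}_{\chi}$ after $[p]^{+}_{\chi}$ rather than simply multiply factors --- together with keeping the three character factors and the ``supported on $(n,f)=1$'' feature straight; none of it is deep, but it is easy to be careless.
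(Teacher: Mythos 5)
Your proof is correct and in substance the same as the paper's: both arguments come down to the fact that $[p]^{\pm}_\chi$ acts on $\fq$-expansions by $a_n\mapsto a_n-c_p^{\pm}\,a_{n/p}$, combined with the elementary identities $\sigma(p^a)-p\,\sigma(p^{a-1})=1$ and $\sigma(p^a)-\sigma(p^{a-1})=p^a$ and the vanishing in the case $p\mid (M,L)/f_\chi$. The only difference is organizational: you package the computation as a product of local factors over the four types of primes, whereas the paper runs the identical prime-by-prime computation as an induction on $\nu(D/M)$ and then on $\nu((M,L)/f_\chi)$ on top of Lemma~\ref{Fourier'}.
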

\begin{proof}
First consider the case when $(M,L)=\ff_\chi$ and hence $E_{M,L,\chi}=E_{M,\ff_\chi\frac{D}{M},\chi}$. We proceed by induction on $\nu(D/M)$. When ${D}/{M}=1$ the result follows from Lemma~\ref{Fourier'}.
Now suppose ${D}/{M}>1$. If $p$ is a prime divisor of ${D}/{M}$, then we find by induction that
\begin{align*}
E_{M,\ff_\chi\frac{D}{M},\chi}&=[p]^-_{\chi}(E_{M,\ff_\chi\frac{D}{pM},\chi})\\
&=a_0(E_{M,\ff_\chi\frac{D}{M},\chi};[\infty])+\sum^{\infty}_{n=1}\left(\sigma_{M,\ff_\chi\frac{D}{pM}}(n)-\sigma_{M,\ff_\chi\frac{D}{pM}}(n/p)\right)\cdot\chi(n)\cdot \fq^n.
\end{align*}
Write $n=m\cdot p^{v_p(n)}$ with $(m,p)=1$, then
\begin{align*}
  &\sigma_{M,\ff_\chi\frac{D}{pM}}(n)-\sigma_{M,\ff_\chi\frac{D}{pM}}(n/p)\\
  &=(p^{v_p(n)}+...+1)\cdot\sigma_{M,\ff_\chi\frac{D}{pM}}(m)-(p^{v_p(n)-1}+...+1)\cdot\sigma_{M,\ff_\chi\frac{D}{pM}}(m)\\
  &=p^{v_p(n)}\cdot\sigma_{M,\ff_\chi\frac{D}{pM}}(n),
\end{align*}
which proves desired result.

Finally we complete the proof by induction on $\nu(\frac{(M,L)}{\ff_\chi})$. If $p$ is a prime divisor of $(M,L)/\ff_\chi$, then we find by induction that
\begin{align*}
  E_{M,L,\chi}&=[p]^-_{\chi}(E_{M,{L}/{p},\chi})\\
  &=a_0(E_{M,L,\chi};[\infty])+\sum^{\infty}_{n=1}\left(\sigma_{M,{L}/{p}}(n)-\sigma_{M,{L}/{p}}(n/p)\right)\cdot\chi(p)\cdot \fq^n,
\end{align*}
which proves the lemma because $\sigma_{M,{L}/{p}}(n)-\sigma_{M,{L}/{p}}(n/p)=0$ if $p\mid n$.
\end{proof}

\begin{prop}\label{P}
For any $(M,L,\chi)\in\CH(DC)$ with $\chi^2=1$, we have $\CP_{\Gamma_1(DC)}(E_{M,L,\chi})=\frac{g(\chi)}{L}\BZ+\CR_{\Gamma_1(DC)}(E_{M,L,\chi})$.
\end{prop}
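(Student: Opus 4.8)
The plan is to pin down $\CP_{\Gamma_1(DC)}(E_{M,L,\chi})$ by means of Stevens' criterion, i.e. the equivalence of (St1) and (St3) recalled in \S2. For the inclusion ``$\subseteq$'' I would apply it with $\CM=\frac{g(\chi)}{L}\BZ+\CR_{\Gamma_1(DC)}(E_{M,L,\chi})$; since $\CR_{\Gamma_1(DC)}(E_{M,L,\chi})\subseteq\CM$ is automatic, the whole point is to check that $\Lambda_{\pm}(E_{M,L,\chi},\eta,1)\in\CM[\eta,\frac{1}{p_\eta}]$ for every $\eta\in\fX^{\infty}_{DC}$. For the reverse inclusion, recall that $\CR_{\Gamma_1(DC)}(E_{M,L,\chi})\subseteq\CP_{\Gamma_1(DC)}(E_{M,L,\chi})$ always holds and that both modules have rank one; so ``$\supseteq$'' amounts to showing $g(\chi)/L\in\CP_{\Gamma_1(DC)}(E_{M,L,\chi})$, equivalently that no submodule of prime index in $\frac{g(\chi)}{L}\BZ+\CR_{\Gamma_1(DC)}(E_{M,L,\chi})$ can satisfy the conditions of (St3), and this too will follow from the same computation of the $\Lambda_\pm$.

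The computational core is the evaluation of $\Lambda_{\pm}(E_{M,L,\chi},\eta,1)$. By Lemma~\ref{Fourier}, $E_{M,L,\chi}=a_0(E_{M,L,\chi})+\sum_{n\ge1}\sigma_{M,L}(n)\chi(n)\fq^n$ and the arithmetic function $n\mapsto\sigma_{M,L}(n)\chi(n)$ is multiplicative, so $L(E_{M,L,\chi},\eta,s)$ admits an Euler product. A prime-by-prime comparison of Euler factors — using also that $p_\eta$ is prime to $D$, so the twist by $\eta$ does not interfere with the primes dividing $D$ — yields
\[
L(E_{M,L,\chi},\eta,s)=L(s,\chi\eta)\,L(s-1,\chi\eta)\cdot\!\!\prod_{\ell\mid\frac{(M,L)}{f_\chi}}\!\!(1-\chi\eta(\ell)\ell^{-s})(1-\chi\eta(\ell)\ell^{1-s})\cdot\!\!\prod_{\ell\mid\frac{D}{M}}\!\!(1-\chi\eta(\ell)\ell^{-s})\cdot\!\!\prod_{\ell\mid M,\ \ell\nmid L}\!\!(1-\chi\eta(\ell)\ell^{1-s}).
\]

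Because $\eta$ is non-quadratic and $\chi$ is quadratic, $\chi\eta$ is a non-trivial primitive character of conductor $f_\chi f_\eta$ (as $(f_\chi,f_\eta)=1$). Evaluating the above at $s=1$ via $L(0,\chi\eta)=-B_{1,\chi\eta}$, the functional equation expressing $L(1,\chi\eta)$ for odd $\chi\eta$ as an explicit multiple of $\frac{\pi i\,\tau(\chi\eta)}{f_\chi f_\eta}B_{1,\overline{\chi\eta}}$, and the Gauss-sum identity $\tau(\overline\eta)\tau(\chi\eta)=\eta(-1)\chi(f_\eta)\eta(f_\chi)\,g(\chi)\,f_\eta$, one obtains that $\Lambda(E_{M,L,\chi},\eta,1)$ equals $\frac{g(\chi)}{2f_\chi}$ times a root of unity in $\BZ[\eta]$, times $B_{1,\chi\eta}B_{1,\overline{\chi\eta}}$, times the correction factors at $\ell\mid D$, when $\chi\eta$ is odd, and equals $0$ when $\chi\eta$ is even (since $B_{1,\psi}=0$ for even non-trivial $\psi$). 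As $p_\eta\equiv-1\pmod{4DC}$ forces $p_\eta\equiv3\pmod 4$, the character $\left(\frac{\cdot}{p_\eta}\right)$ is odd, so $\eta$ and $\eta\cdot\left(\frac{\cdot}{p_\eta}\right)$ have opposite parities; hence exactly one of the two terms defining $\Lambda_{\pm}(E_{M,L,\chi},\eta,1)$ vanishes and $\Lambda_{\pm}(E_{M,L,\chi},\eta,1)$ equals $\pm\frac12$ times the other. Thus $\Lambda_{\pm}(E_{M,L,\chi},\eta,1)$ is, up to a unit of $\BZ[\eta,\frac{1}{p_\eta}]$, equal to $\frac{g(\chi)}{L}$ times $\frac{L}{4f_\chi}$ times a product of two generalized Bernoulli numbers times the $\ell\mid D$ correction factors; using that $L/f_\chi$ is an integer (because $f_\chi\mid(M,L)\mid L$), that the correction factors at $\ell\mid D$ account exactly for the remaining part of the denominator $L$, and that the integrality and $2$-divisibility of the generalized Bernoulli numbers involved are precisely what Theorem 4.2 of \cite{St2} provides (this being where the factor $4DC$ in the definition of $S_{DC}$ is used), one concludes $\Lambda_{\pm}(E_{M,L,\chi},\eta,1)\in\frac{g(\chi)}{L}\BZ[\eta,\frac{1}{p_\eta}]\subseteq\CM[\eta,\frac{1}{p_\eta}]$. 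This establishes ``$\subseteq$''.

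For ``$\supseteq$'' one argues that the module $\CM'=\frac{g(\chi)}{L}\BZ+\CR_{\Gamma_1(DC)}(E_{M,L,\chi})$ is minimal among those satisfying (St3): if $\CM''\subsetneq\CM'$ had index divisible by a prime $p$, then either $\CR_{\Gamma_1(DC)}(E_{M,L,\chi})\not\subseteq\CM''$, or else one can choose $\eta$ with $p_\eta\neq p$ and with $\chi\eta$ ``generic modulo $p$'' — so that the Bernoulli factors above are $p$-adic units and the power of $\ell=p$ coming from the correction factors is under control — for which $\Lambda_{\pm}(E_{M,L,\chi},\eta,1)\notin\CM''[\eta,\frac{1}{p_\eta}]$; either way (St3) fails for $\CM''$, forcing $\CP_{\Gamma_1(DC)}(E_{M,L,\chi})=\CM'$. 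The main obstacle I anticipate is exactly this last matching: one must track the power of $2$ and the prime-to-level Euler corrections \emph{exactly}, not merely up to a bounded factor, and one must guarantee the required $p$-adic non-vanishing of the products of generalized Bernoulli numbers uniformly as $\eta$ ranges over $\fX^{\infty}_{DC}$ — including $\eta$ of higher prime-power conductor, which is why one works with $\fX^{\infty}_{DC}$ and with (St3) in place of (St2). Both points rest on the precise divisibility results of \cite{St2}.
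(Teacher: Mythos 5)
Your proposal follows essentially the same route as the paper's proof: use Lemma~\ref{Fourier} to factor $L(E_{M,L,\chi},\eta,s)$ as $L(\chi\eta,s)L(\chi\eta,s-1)$ times Euler corrections at the primes dividing $D$ (your three-way split of the correction factors over $\ell\mid(M,L)/f_\chi$, $\ell\mid D/M$ and $\ell\mid M,\ \ell\nmid L$ regroups to exactly the paper's products over $M/f_\chi$ and $L/f_\chi$), evaluate $\Lambda(E_{M,L,\chi},\eta,1)$ in terms of $B_{1,\chi\eta}B_{1,\overline{\chi\eta}}$, and then invoke Stevens' criterion together with Theorem 4.2 (b) and (c) of \cite{St2} for the two inclusions. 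The argument is correct and matches the paper's, with your parity observation and the minimality phrasing of the reverse inclusion being harmless elaborations of what the paper does implicitly.
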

\begin{proof}
Denote $\ff_\chi$ to be the conductor of $\chi$. For any character $\eta$ of conductor $\ff_\eta$ prime to $D$, it follows from Lemma~\ref{Fourier} that
\begin{align*}
L(E_{M,L,\chi},\eta,s)=\prod_{p\mid{M}/{\ff_\chi}}(1-\chi\eta(p)\cdot p^{1-s})\cdot\prod_{p\mid{L}/{\ff_\chi}}(1-\chi\eta(p)\cdot p^{-s})\cdot L(\chi\eta,s-1)\cdot L(\chi\eta,s)
\end{align*}
In particular, if $\chi\eta(-1)=1$, then $\Lambda(E_{M,L,\chi},\eta,1)=0$; on the other hand, if $\chi\eta(-1)=-1$, then
\begin{align*}
  \Lambda(E_{M,L,\chi},\eta,1)=
-\frac{\eta(-\ff_\chi)\chi(\ff_\eta)g(\chi)}{2\ff_\chi}\cdot\prod_{p\mid{M}/{\ff_\chi}}(1-\chi\eta(p))\cdot\prod_{p\mid{L}/{\ff_\chi}}(1-\frac{\chi\eta(p)}{p})\cdot B_{1,\chi\eta}\cdot B_{1,\overline{\chi\eta}}.
\end{align*}
Therefore $\frac{g(\chi)}{L}\BZ+\CR_{\Gamma_1(DC)}(E_{M,L,\chi})$ satisfies the condition (St3) by Theorem 4.2(b) of \cite{St2}, so that $\CP_{\Gamma_1(DC)}(E_{M,L})$ is contained in $\frac{g(\chi)}{L}\BZ+\CR_{\Gamma_1(DC)}(E_{M,L,\chi})$.

To complete the proof, it is thus sufficient to show that $\CP_{\Gamma_1(DC)}(E_{M,L,\chi})$ contains $\frac{g(\chi)}{L}\BZ$. Let $q$ be an arbitrary prime. Let $p'\in S_{DC}$ be a prime different from $q$. Then, for all but finitely many $\eta\in\fX^{\infty}_{DC}$ whose conductor is a power of $p'$, both $\prod_{p\mid {M}/{\ff_\chi}}(\chi(p)-\eta(p))$ and $\prod_{p\mid {L}/{\ff_\chi}}(\chi(p)\cdot p-\eta(p))$ are $q$-adic units. So it follows from Theorem 4.2(c) of \cite{St2} that $\frac{L}{g(\chi)}\cdot\Lambda(E_{M,L,\psi},\chi,1)$ is a $q$-adic unit for infinitely many $\eta\in\fX^{\infty}_{DC}$ and hence completes the proof.
\end{proof}

\begin{thm}\label{order} For any $(M,L,\chi)\in\CH(DC)$ with $\chi$ a quadratic character of conductor $\ff_\chi$, we have
\[\CC^{(DC)}_{M,L,\chi}\otimes_{\BZ}\BZ[{1}/{6}]\simeq\frac{\frac{g(\chi)}{\ff_\chi n_\chi}\BZ+\varphi(\frac{D}{\ff_\chi})\psi(\frac{L}{\ff_\chi})(\frac{D}{M},C)\BZ}{\varphi(\frac{D}{\ff_\chi})\psi(\frac{L}{\ff_\chi})(\frac{D}{M},C)\BZ}\otimes_{\BZ}\BZ[{1}/{6}] .\]
\end{thm}
\begin{proof}
Since $\CR_{DC}(E_{M,L,\chi})$ contains $\CR_{\Gamma_1(DC)}(E_{M,L,\chi})$, we find by Corollary~\ref{R} and Proposition~\ref{P} that
\begin{align*}
A^{(s)}(E_{M,L,\chi})&=\frac{\CP_{\Gamma_1(DC)}(E_{M,L,\chi})+\CR_{DC}(E_{M,L,\chi})}{\CR_{DC}(E_{M,L,\chi})}\\
&\simeq\frac{\frac{g(\chi)}{\ff_\chi n_\chi}\BZ+\varphi({D}/{\ff_\chi})\psi({L}/{\ff_\chi})(\frac{D}{M},C)\BZ}{\varphi({D}/{\ff_\chi})\psi({L}/{\ff_\chi})\cdot(\frac{D}{M},C)\BZ}
\end{align*}
Thus, to prove the theorem, it suffices to show that $\CC^{(DC)}_{M,L,\chi}\bigcap\sum_{DC}$ is annihilated by $6$.

$\bullet$ If $\chi=1$, then $\CC^{(DC)}_{M,L,\chi}\bigcap\Sigma_{DC}$ is $\BQ$-rational and of multiplicative type. So it must be contained in $\mu_2$ and hence annihilated by $2$.

$\bullet$ If $\chi$ is nontrivial so that $\ff_\chi>1$ is odd. By Proposition~\ref{eigen}, $T_p$ annihilates $\CC^{(DC)}_{M,L,\chi}$ for each prime divisor $p$ of $\ff_\chi$. On the other hand, by \cite{LO}, any such $T_p$ acts on $\sum_{DC}$ as multiplication by $p$. So we find that $\CC^{(DC)}_{M,L,\chi}\bigcap\sum_{DC}\subseteq\mu_{\ff_\chi}$. However, since $\chi$ can be cyclotomic only if $\ff_\chi=3$, we find that $\sum_{DC}\bigcap C^{(DC)}_{M,L,\chi}$ is zero away from $3$ and hence completes the proof.
\end{proof}

\begin{remark}
For any character $\chi$ of conductor $\ff_\chi$, let
\[d_\chi\colonequals\sum_{a,b\in\BZ/\ff_\chi\BZ}\chi(a)\chi(b)B_2(\frac{a+b}{\ff_\chi}),\]
which is clearly $6\ff_\chi$-integral. Since $g(\chi)^2=\pm \ff_\chi$, it follows that
\begin{align*}
\frac{g(\chi)}{\ff_\chi n_\chi}=-\frac{4g(\chi)^2}{\ff^2_\chi}\cdot \frac{1}{d_\chi}=\pm\frac{4}{\ff_\chi d_\chi},
\end{align*}
and we find from Proposition~\ref{order} that
\begin{align}
\CC^{(DC)}_{M,L,\chi}\otimes_{\BZ}\BZ[\frac{1}{6\ff_\chi}]\simeq \frac{\BZ[\frac{1}{6\ff_\chi}]}{\varphi(\frac{D}{\ff_\chi})\psi(\frac{L}{\ff_\chi})(\frac{D}{M},C)d_\chi \BZ[\frac{1}{6\ff_\chi}]}.
\end{align}
\end{remark}

\section{Proof of Theorem~\ref{M}}

\subsection{}We first recall some notations from the algebraic theory of modular forms. For more details of this theory, we refer the reader to \S1 of \cite{Oh}. For any positive integer $N$, let $M^B_2(\Gamma_0(N),\BZ)$ (resp. $S^B_2(\Gamma_0(N),\BZ)$) be the sub-$\BZ$-module of $\CM_2(\Gamma_0(N),\BC)$ (resp. $S_2(\Gamma_0(N),\BC)$) whose Fourier expansions at infinity have coefficients in $\BZ$. Then, for an arbitrary commutative ring $R$, we define
\begin{align*}
\begin{cases}
M^B_2(\Gamma_0(N),R)\colonequals M^B_2(\Gamma_0(N),\BZ)\otimes_{\BZ}R\\
S^B_2(\Gamma_0(N),R)\colonequals S^B_2(\Gamma_0(N),\BZ)\otimes_{\BZ}R.
\end{cases}
\end{align*}
If $N$ is invertible in $R$, then, by a moduli theoretic method, we can define $R$-modules
\[S^A_2(\Gamma_0(N),R)\subseteq M^A_2(\Gamma_0(N),R).\]
By (1.3.4) of \cite{Oh}, if $R$ is flat over $\BZ[1/N]$, then
\begin{align*}
\begin{cases}
  M^B_2(\Gamma_0(N),R)=M^A_2(\Gamma_0(N),R)\\
  S^B_2(\Gamma_0(N),R)=S^A_2(\Gamma_0(N),R).
\end{cases}
\end{align*}
Note that, for any $g\in C^{\infty}(\CH,\BC)$ and $z\in\CH$, we have
\begin{align*}
\begin{cases}
[p]^+_\chi(g)(z)=g(z)-p\cdot\chi(p)\cdot g(pz)\\
[p]^-_\chi(g)(z)=g(z)-\chi(p)^{-1}\cdot g(pz),
\end{cases}
\end{align*}
where $\chi$ is a Dirichlet character and $p$ a prime not dividing the conductor of $\chi$. So it follows from (3.1) and (3.2) that $E_{M,L,\chi}$ belongs to $M^B_2(\Gamma_0(DC),\BZ[1/6])$. In particular we find that
\begin{align*}
  E_{M,L,\chi}&\in M^B_2(\Gamma_0(DC),\BZ[1/6D])\\
  &=M^A_2(\Gamma_0(DC),\BZ[1/6D]).
\end{align*}

\subsection{}For any $(M,L,\chi)$ in $\CH(DC)$, denote by $I^{(DC)}_{M,L,\chi}$ the Eisenstein ideal $I_{(DC)}(E_{M,L,\chi})$. When $\chi$ is a \emph{quadratic} character, we call $I^{(DC)}_{M,L,\chi}$ a \emph{quadratic} Eisenstein ideal. Then, since $\chi^2=1$, all values of $\chi$ are either $1$ or $-1$, and it follows from Proposition~\ref{eigen} that $I^{(DC)}_{M,L,\chi}$ is generated by the following elements of $\BT(DC)$:
\begin{itemize}
  \item $T_\ell-\chi(\ell)-\chi(\ell)\cdot\ell$ for primes $\ell\nmid D$;
  \item $T_p-\chi(p)$ for primes $p\mid M/(M,L)$;
  \item $T_p-\chi(p)\cdot p$ for primes $p\mid L/(M,L)$;
  \item $T_p$ for primes $p\mid(M,L)$.
\end{itemize}

\begin{lem}\label{5.1}Notations are as above. Then:
\begin{enumerate}
  \item $\BT(DC)/I^{(DC)}_{M,L,\chi}$ is a finite cyclic group;
  \item There is a surjection $\BT(DC)/I^{(DC)}_{M,L,\chi}\twoheadrightarrow  \CC^{(DC)}_{M,L,\chi}$.
\end{enumerate}
\end{lem}
\begin{proof}
Since $T_\ell$ is congruent to an integer modulo $I^{(DC)}_{M,l,\chi}$ for any prime $\ell$, there is a surjection $\phi:\ \BZ\rightarrow\BT(DC)/I^{(DC)}_{M,L,\chi}$. Thus, to prove (1), we only need to show $\ker(\phi)$ is nonzero. Suppose to the contrary that $\ker(\phi)=0$ so that $\phi$ is an isomorphism, then we obtain a normalized eigenform $f=\sum^{\infty}_{n=1}a_n(f)\fq^n$ with $a_\ell(f)=\chi(\ell)(1+\ell)$ for any prime $\ell\nmid D$. But this contradicts to the Ramamujan bound, so we fine that $\ker(\phi)$ must be nonzero.

By Theorem 3.2.4 of \cite{St}, $\CC^{(DC)}_{M,L,\chi}$ is a cyclic $\BT(DC)$-module. Since $I^{(DC)}_{M,L,\chi}$ annihilates $\CC^{(DC)}_{M,L,\chi}$ (see Remark~\ref{rmk}), the action of $\BT(DC)$ on $\CC^{(DC)}_{M,L,\chi}$ induces the desired surjection,
\end{proof}

\begin{thm}\label{index}There is an isomorphism
\[\BT(DC)/I^{(DC)}_{M,L,\chi}\otimes_{\BZ}\BZ[1/6D]\simeq\CC^{(DC)}_{M,L,\chi}\otimes_{\BZ}\BZ[1/6D].\]
\end{thm}
\begin{proof}
Let $p$ be a prime not dividing $6D$. By Lemma~\ref{5.1}(1),  we have $\BT(DC)_{p}/I^{(DC)}_{M,L,\chi}\simeq\BZ/p^{m}\BZ$ for some integer $m\geq0$. By Theorem 2.2 of \cite{Ri}, there is a perfect pairing of $\BZ$-modules
\[\BT(DC)\times S^B_2(\Gamma_0(DC),\BZ)\rightarrow\BZ,\ (T,g)\mapsto a_1(T(g);[\infty]).\]
Let $S^B_2(\Gamma_0(DC),\BZ/p^{m}\BZ)[I^{(DC)}_{M,L,\chi}]$ be the submodule of $S^B_2(\Gamma_0(DC),\BZ/p^{m}\BZ)$ annihilated by $I^{(DC)}_{M,L,\chi}$. It follows that we have an induced perfect pairing of $\BZ/p^{m}\BZ$-modules
\[(\BT(DC)/I^{(DC)}_{M,L,\chi})_{p}\times S^B_2(\Gamma_0(DC),\BZ/p^{m}\BZ)[I^{(DC)}_{M,L,\chi}]\rightarrow\BZ/p^{m}\BZ,\]
and hence $S^B_2(\Gamma_0(DC),\BZ/p^{m}\BZ)[I^{(DC)}_{M,L,\chi}]=({\BZ}/{p^{m}\BZ})\cdot\theta$ for some normalized eigenform $\theta$. On the other hand, since $E_{M,L,\chi}\pmod{p^{m}\BZ}$ belongs to $M^B_2(\Gamma_0(DC),\BZ/p^{m}\BZ)$ and has the same Hecke eigenvalues as those of $\theta$, we obtain the following constant form
\begin{align*}
  A&\colonequals E_{M,L,\chi}\pmod{p^{m}\BZ}-\theta\\
  &=a_0(E_{M,L,\chi};[\infty])\pmod{p^m}
\end{align*}
Below we distinguish into two situations:
\begin{itemize}
  \item If $L\neq1$ so that $a_0(E_{M,L,\chi};[\infty])=0$ by Proposition~\ref{constant3}, then $A=0$ and hence $E_{M,L,\chi}\pmod{p^{m}\BZ}=\theta$ by the $\fq$-expansion principle (see Proposition 1.2.10 of \cite{Oh}). Thus we find that $E_{M,L,\chi}\pmod{p^{m}\BZ}$ belongs to $S^B_2(\Gamma_0(DC),\BZ/p^m\BZ)$. Since $p$ is prime to $D$, it follows from Lemma (1.3.5) of \cite{Oh} that $E_{M,L,\chi}\pmod{p^{m}\BZ}$ belongs to $S^A_2(\Gamma_0(DC),\BZ/p^m\BZ)$, and hence vanishes at all cusps. By Proposition~\ref{constant3}, the constant term at any cusp with $rs=\frac{D}{M},(s,\ff_\chi)=1$ and $(M,L)\mid t$ (for example at the cusp $[\frac{(M,L)}{M(M,C)}]$) is of the form
\[u\cdot\varphi(\frac{D}{\ff_\chi})\cdot\psi(\frac{L}{\ff_\chi})\cdot d_\chi,\]
where $u$ is a $p$-adic unit. It follows that $p^{m}$ divides $\varphi(\frac{D}{\ff_\chi})\cdot\psi(\frac{L}{\ff_\chi})\cdot d_\chi$, which proves the desired assertion in view of (4.7).
  \item If $L=1$, then $E_{M,L,\chi}=E_{D,1}$ whose constant term at infinity is $\pm\frac{1}{24}\varphi(D)$. Let $q$ be an auxiliary prime with $q\nmid D$ and $q\neq\pm1\pmod{p}$. Let $B(q)$ be the operator introduced on P289 of \cite{Oh}, which equals $\frac{1}{q}\gamma_q$ when base change to $\BC$. Then
\begin{align*}
0&=\left(1-q\cdot B(q)\right)(A)\\
&=E_{D,q}\pmod{p^m\BZ}-\left(1-q\cdot B(q)\right)(\theta),
\end{align*}
which implies that $E_{D,q}\pmod{p^m\BZ}$ is a cuspform. In particular $a_0(E_{D,q};[1])\equiv0$ is congruent to zero modulo ${p^m}$, which implies by Proposition~\ref{constant3} that $p^m$ divides $\varphi(Dq)\cdot\psi(q)=\varphi(D)\cdot(q^2-1)$. It follows that $p^m$ divides $\varphi(D)$ as desired in view of (4.7) and hence completes the proof.
\end{itemize}
\end{proof}

\subsection{}For any prime divisor $p$ of $D$, there are two degeneracy maps
\[\pi^{(p)}_1,\ \pi^{(p)}_p:X_0(DC)\rightarrow X_0(DC/p),\]
which can be analytically described as $\pi^{(p)}_1(z)=z$ and $\pi^{(p)}_p(z)=pz$ for any $z\in\CH$. Thus, from the Picard functoriality, we obtain a homomorphism
\[\iota_p=\pi^{(p)*}_1+\pi^{(p)*}_p:J_0(DC/p)^2\rightarrow J_0(DC)\]
between abelian varieties over $\BQ$. Define
\begin{align*}
\begin{cases}
J_0(DC)_{p\text{-old}}\colonequals\Im(\iota_p)\\
J_0(DC)^{p\text{-new}}\colonequals J_0(DC)/J_0(DC)_{p\text{-old}}.
\end{cases}
\end{align*}
Both $J_0(DC)_{p\text{-old}}$ and $J_0(DC)^{p\text{-new}}$ are stable under the action of $\BT_0(DC)$, so we can define
\begin{align*}
\begin{cases}
\BT(DC)^{p\text{-old}}:=\Im(\BT_0(DC)\rightarrow \End_{\BQ}(J_0(DC)_{p\text{-old}}))\\
\BT(DC)^{p\text{-new}}:=\Im(\BT_0(DC)\rightarrow \End_{\BQ}(J_0(DC)^{p\text{-new}})).
\end{cases}
\end{align*}
We use the same symbols for the Hecke operators in $\BT(DC)$ and their images in $\BT(DC)^{p\text{-old}}$ or $\BT(DC)^{p\text{-new}}$ for simplicity. It follows that there are two surjective $\BZ$-algebra homomorphisms
\begin{align}
\begin{cases}
\BT(DC)\twoheadrightarrow\BT(DC)^{p\text{-old}}\\
\BT(DC)\twoheadrightarrow\BT(DC)^{p\text{-new}},
\end{cases}
\end{align}
which combine to give the following homomorphism with nilpotent kernel
\begin{align}
\BT(DC)\rightarrow\BT(DC)^{p\text{-old}}\times\BT(DC)^{p\text{-new}}.
\end{align}
There is a natural inclusion of $\BT(DC/p)$ in $\End_{\BQ}(J_0(DC)_{p\text{-old}})$ induced from the diagonal action of $\BT(DC/p)$ on $J(DC/p)^2$. Put $R:=\BT(DC/p)\bigcap\BT(DC)^{p\text{-old}}$. For any prime $\ell\neq p$ and $i=1\text{ or }p$, we have
\[ T^{(DC)}_\ell\circ\pi^{(p)*}_i=\pi^{(p)*}_i\circ T^{(DC/p)}_\ell,\]
so $R$ is generated by all $\ell$-th Hecke operator for primes $\ell$ different from $p$. In the following discussion we distinguish into two situations:

$\bullet$ If $p$ is a prime divisor of $D/C$ so that $p$ exactly divide $DC$, then by the lemma on P491 of \cite{Wiles} we have
\begin{align}
R\otimes_{\BZ}\BZ[1/2]=\BT(DC/p)\otimes_{\BZ}\BZ[1/2].
\end{align}
Moreover, if $p$ is odd, then (loc.cit.)
\[R=\BT(DC/p),\]
and there is an isomorphism (see \S7 of \cite{Ri2})
\begin{align}
\BT(DC/p)[x]/(x^2-T^{(DC/p)}_p x+p)\simeq\BT(DC)^{p\text{-old}},
\end{align}
which maps $T(DC/p)_{\ell}$ to $T^{(DC)}_\ell$ for any prime $\ell\neq p$ and maps $x$ to $T^{(DC)}_p.$

$\bullet$ On the other hand, if $p$ is a prime divisor of $C$ so that $p^2\mid DC$, then straightforward verification shows that
\begin{align*}
\begin{cases}
T^{(DC)}_p=\pi^{(p)*}_p\circ \pi^{(p)}_{1*}\\
T^{(DC/p)}_p=\pi^{(p)}_{1*}\circ \pi^{(p)*}_{p},
\end{cases}
\end{align*}
which implies
\begin{align}
\begin{cases}
T^{(DC)}_p\circ\pi^{(p)*}_p=\pi^{(p)*}_p\circ T^{(DC/p)}_p\\
T^{(DC)}_p\circ\pi^{(p)*}_1=p\cdot\pi^{(p)*}_{p}.
\end{cases}
\end{align}
Therefore $\pi^{(p)*}_p(J_0(DC/p))$ is stable under the action of $\BT(DC)^{p\text{-old}}$, and we obtain an induced surjection
\[\varphi'_p:\BT(DC)^{p\text{-old}}\twoheadrightarrow\BT(DC/p),\]
which maps $T^{(DC)}_\ell$ to $T^{(DC/p)}_\ell$ for \emph{any} prime $\ell$. Moreover, denote
\begin{align*}
  J_0(DC)^{(1)}_{p\text{-new}}\colonequals J_0(DC)_{p\text{-old}}/\pi^{(p)*}_p(J_0(DC/p)),
\end{align*}
which is also stable under $\BT(DC)^{p\text{-old}}$, and define
\begin{align*}
  \BT(DC)^{p\text{-new}}_{(1)}\colonequals\Im(\BT(DC)^{p\text{-old}}\rightarrow\End_{\BQ}(J_0(DC)^{(1)}_{p\text{-new}})).
\end{align*}
Then we obtain a surjection
\begin{align*}
  \varphi''_p:\BT_0(DC)^\text{p-old}\twoheadrightarrow{\BT(DC)^{p\text{-new}}_{(1)}},
\end{align*}
which maps $T^{(DC)}_p$ to zero by (6.5). Note that $\pi^{(p)*}_1$ induces a surjection from $J_0(DC/p)\rightarrow J_0(DC)^{p\text{-new}}_{(1)}$, it follows that $\BT(DC)^{p\text{-new}}_{(1)}$ is a quotient ring of $\BT(DC/p)$. Finally, the two homomorphisms $\varphi'_p$ and $\varphi''_p$ combine to give the following homomorphism with nilpotent kernel
\begin{align}
\varphi_p:\BT(DC)^{p\text{-old}}\hookrightarrow\BT(DC/p)\times{\BT(DC)^{p\text{-new}}_{(1)}}.
\end{align}

\subsection{}Form now on let $D$ be an odd square-free positive integer and $C$ a divisor of $D$ . Let $\chi$ be a quadratic character with conductor $\ff_\chi$ dividing $C$. Define
\[I_{\chi}\colonequals\left(T^{(DC)}_\ell-\chi(\ell)-\chi(\ell)\cdot\ell\right)_{\ell\nmid D}.\]

\begin{lem}\label{6.1}$\BT(DC)/I_\chi$ is a finite ring.
\end{lem}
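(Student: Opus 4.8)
The plan is to reproduce, almost verbatim, the finiteness argument from the first part of Lemma~\ref{5.1}. The structural input is that $\BT_0(DC)$ is a finitely generated $\BZ$-module: by definition it is a $\BZ$-subalgebra of $\End\bigl(S_2(\Gamma_0(DC),\BC)\bigr)$, and it stabilises the finite free $\BZ$-lattice $S^B_2(\Gamma_0(DC),\BZ)$, hence embeds into the finite free $\BZ$-module $\End_\BZ\bigl(S^B_2(\Gamma_0(DC),\BZ)\bigr)$. Consequently $\BT_0(DC)/I_\chi$ is a finitely generated $\BZ$-module, and it is finite if and only if it is $\BZ$-torsion, i.e. if and only if $\bigl(\BT_0(DC)/I_\chi\bigr)\otimes_\BZ\BQ=0$.

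Suppose this fails. Then $\bigl(\BT_0(DC)/I_\chi\bigr)\otimes_\BZ\BQ$ is a nonzero finite-dimensional commutative $\BQ$-algebra, so it admits a $\BQ$-algebra homomorphism to $\ov{\BQ}$; precomposing with the quotient map yields a ring homomorphism $\lambda\colon\BT_0(DC)\to\ov{\BQ}$ with $\lambda(I_\chi)=0$. Its kernel is a prime ideal, and $\BT_0(DC)/\ker\lambda$ is an order in a number field, so the associated system of Hecke eigenvalues is realised by a normalised cuspidal newform $f$ of some level dividing $DC$. For every prime $\ell\nmid D$ we have $\ell\nmid DC$, hence $\ell$ does not divide the level of $f$, and $\lambda(I_\chi)=0$ forces
\[
a_\ell(f)=\lambda\bigl(T^{\Gamma_0(DC)}_\ell\bigr)=\chi(\ell)+\chi(\ell)\cdot\ell=\chi(\ell)\,(1+\ell).
\]
Since $f_\chi\mid C\mid D$, such an $\ell$ also satisfies $\ell\nmid f_\chi$, so $\chi(\ell)\in\{\pm 1\}$ and $|a_\ell(f)|=1+\ell>2\sqrt{\ell}$. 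Picking any such $\ell$ (there are infinitely many primes not dividing $D$), this contradicts the Ramanujan bound $|a_\ell(f)|\le 2\sqrt{\ell}$ for the Hecke eigenvalues of a weight-two cuspidal eigenform. Hence $\BT_0(DC)/I_\chi$ is finite.

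No real obstacle is expected here; the one point deserving a word is that the eigensystem $\lambda$ obtained above is genuinely \emph{cuspidal}, which is built into the definition of $\BT_0(DC)$ as (the image of the Hecke algebra in) $\End\bigl(S_2(\Gamma_0(DC),\BC)\bigr)$ — had we worked with the Hecke algebra acting on all of $\CM_2(\Gamma_0(DC),\BC)$, the Eisenstein series $E_{M,L,\chi}$ itself would realise $\lambda$ and the contradiction would disappear. This is precisely the same subtlety handled in the proof of Lemma~\ref{5.1}, and the present lemma is simply the analogous statement with $DC/p$ replaced throughout by the full level $DC$.
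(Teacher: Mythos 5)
Your argument is correct and is essentially the paper's own proof: both reduce finiteness to showing $\BT_0(DC)/I_\chi$ is $\BZ$-torsion, extract a system of Hecke eigenvalues (the paper via a homomorphism to $\BC$, you via one to $\ov{\BQ}$) realised by a normalized cuspidal eigenform with $a_\ell=\chi(\ell)(1+\ell)$ for $\ell\nmid D$, and contradict the Ramanujan bound. Your added remarks on why $\BT_0(DC)$ is a finite $\BZ$-module and on why the eigensystem is genuinely cuspidal are correct refinements of steps the paper leaves implicit.
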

\begin{proof}
Since $\BT(DC)$ is finite over $\BZ$, $\BT(DC)/I_\chi$ is finitely generated over $\BZ$. Thus, to prove the lemma, it suffices to show that $(\BT(DC)/I_\chi)\otimes_{\BZ}\BC=0$.

Suppose to the contrary that $(\BT(DC)/I_\chi)\otimes_{\BZ}\BC$ is nonzero. Then by the Hilbert Nullstellensatz, there exists a nonzero $\BZ$-algebra homomorphism from $\BT(DC)/I_\chi$ to $\BC$, which in turn gives a normalized cuspidal eigenform whose $\ell$-th Hecke eigenvalue is $\chi(\ell)+\chi(\ell)\cdot\ell$ for any prime $\ell\nmid D$. But this contradicts to the Ramanujan bound, so $(\BT(DC)/I_\chi)\otimes_{\BZ}\BC$ must be zero.
\end{proof}

In particular $\BT(DC)/I_\chi$ is an artinian ring. Therefore, for any prime $q$, we have
\begin{align}\label{decomposition}
\BT(DC)_q/I_\chi\simeq\prod_{\fm}\BT(DC)_\fm/I_\chi,
\end{align}
where $\fm$ runs over all the maximal ideals containing the ideal $(q,I_\chi)$.

\begin{prop}\label{prop6.1}If $q$ is a prime with $(q,2D)=1$ and $\fm$ a maximal ideal such that $\fm\supseteq(q,I_\chi)$, then
\begin{align*}
T^{(DC)}_\ell\pmod{\fm}\equiv
\begin{cases}
\chi(\ell)+\chi(\ell)\cdot\ell&\text{if }\ell\nmid D\\
\chi(\ell)\text{ or }\chi(\ell)\ell&\text{if }\ell\mid{D}/{C}\\
0,\ \chi(\ell)\text{ or }\chi(\ell)\ell&\text{if }\ell\mid C.
\end{cases}
\end{align*}
Moreover, if $p$ is a prime divisor of $\ff_\chi$, then $T^{(DC)}_p\equiv0\pmod{\fm}$ and $\BT(DC)^{p\text{-old}}_{\fm}=0$.
\end{prop}
\begin{proof}
For simplicity, we will also denote by $\fm$ to be its images in various old- or new-quotients of $\BT(DC)$. Let $\rho_\fm:\Gal(\overline{\BQ}/\BQ)\rightarrow\GL_2(\BT(DC)/\fm)$ be the unique semi-simple Galois representation associated to $\fm$ (see \S5 of \cite{Ri2}). For any prime $\ell$ not dividing $D$, the assertion follows directly from the definition of $I_\chi$. Therefore, by Chebotarev density theorem and the theorem of Brauer-Nesbitt, we have $\rho_\fm\simeq\chi\oplus\chi\epsilon_q$, where $\epsilon_q$ is the modulo $q$ cyclotomic character.

\begin{itemize}
  \item Let $p$ be a prime divisor of $D/C$. If $\BT(DC)^{p\text{-new}}_{\fm}$ is nonzero and hence $\BT(DC)/\fm\simeq\BT(DC)^{p\text{-new}}/\fm$, then $\rho_{\fm}$ comes from a $p$-new form. Then it follows from Theorem 3.1,(e) of \cite{DDT} that $\rho_\fm\simeq\eta\oplus\eta\epsilon_q$ when restricted to $\Gal(\overline{\BQ}_p/\BQ_p)$, and $\eta(\Frob_p)=T^{(DC)}_p\pmod{\fm}$. So we find that $T^{(DC)}_p\equiv\chi(p)\text{ or }p\chi(p)\pmod{\fm}$.
  \item Still assume that $p$ is a prime divisor of $D/C$. If $\BT(DC)^{p\text{-new}}_\fm=0$, then $\BT(DC)/\fm\simeq\BT(DC)^{p\text{-old}}/\fm$. Since $q$ is odd, we have $\BT(DC/p)_q[x]/(x^2-T^{(DC/p)}_p x+p)\simeq\BT(DC)^{p\text{-old}}_q$ by (5.3) and (5.4). Let $\fn$ be the inverse image of $\fm$ in $\BT(DC/p)$. Then $\fn$ contains the element $T^{(DC/p)}_\ell-\chi(\ell)-\chi(\ell)\ell$ for any prime $\ell\nmid D$. In particular $\rho_\fn\simeq\rho_\fm$ by Chebotarev density theorem and the theorem of Brauer-Nesbitt. Now since $q\neq p$ and $p\nmid DC/p$, it follows by Proposition 5.1 of \cite{Ri2} that $T^{(DC/p)}\equiv\chi(p)+\chi(p) p\pmod{\fn}$. Therefore $T^{(DC)}_p\equiv\chi(p)$ or $\chi(p)\cdot p\pmod{\fm}$ as desired.
  \item Let $p$ be a prime divisor of $C$. If $\BT(DC)^{p\text{-new}}_{\fm}$ is nonzero and hence $\BT(DC)/\fm\simeq\BT(DC)^{p\text{-new}}/\fm$, then it follows from the newform theory that $T^{(DC)}_p\equiv0\pmod{\fm}$ as $p^2$ divides $DC$.
  \item Still assume that $p$ is a prime divisor of $C$. If $\BT(DC)^{p\text{-new}}_\fm=0$, then $\BT(DC)/\fm\simeq\BT(DC)^{p\text{-old}}/\fm$, and hence one of $\varphi'_p(\fm)$ and $\varphi''_p(\fm)$ must be maximal. In the first case, there is an induced isomorphism $\BT(DC)^{p\text{-old}}/\fm\simeq\BT(DC/p)/\fm$, so we are done by the above already proved result. In the second case, we have $\BT(DC)^{p\text{-old}}/\fm\simeq\BT(DC)^{p\text{-new}}/\fm$, which implies that $T^{(DC)}_p\equiv0\pmod{\fm}$ because $\varphi'_p$ maps $T^{(DC)}_p$ to zero.
  \item Finally, let $p$ be a prime divisor of $\ff_\chi$. Suppose $\BT(DC)^{p\text{-old}}/\fm$ is nonzero, then we find as above that one of $\varphi'_p(\fm)$ and $\varphi''_p(\fm)$ must be maximal. Since $\varphi'_p(\fm)\subseteq\BT(DC/p)$, $\varphi''_p(\fm)\subseteq\BT(DC)^{p\text{-new}}_{(1)}$, and $\BT(DC)^{p\text{-new}}_{(1)}$ is a quotient ring of $\BT(DC/p)$ as we remarked before, we will always obtain some maximal ideal $\fn$ in $\BT(DC/p)$ which contains $T^{(DC/p)}_{\ell}-\chi(\ell)-\ell\chi(\ell)$ for all those primes $\ell$ not dividing $D$. In particular $\rho_{\fn}\simeq\chi\oplus\chi\epsilon_q$, which is ramified at $p$.

      However, since $p$ exactly divides $(D/C)$, the restriction of $\rho_\fn$ to ${\Gal(\overline{\BQ}_p/\BQ_p)}$ has an unramified quotient by Theorem 3.1,(e) of \cite{DDT}.  This is a contradiction. So we have $\BT(DC)^{p\text{-old}}_{\fm}=0$ and hence  $\BT(DC)/{\fm}\simeq\BT(DC)^{p\text{-new}}/{\fm}$, which implies $T^{(DC)}_p\equiv0\pmod{\fm}$ by the newform theory.
\end{itemize}
\end{proof}

\subsection{}Let $q$ be a prime with $(q,2D)=1$. Fix a maximal ideal $\fm$ containing $(q,I_\chi)$. Let
\begin{align*}
  \CP_1(\fm)&=\{p\text{ a prime}: p\mid D,\  T^{(DC)}_p\equiv0,\chi(p)\pmod{\fm}\}\\
  \CP_2(\fm)&=\{p\text{ a prime}: p\mid D,\  T^{(DC)}_p\equiv0,\chi(p)p\pmod{\fm},\text{ and }p\neq1\pmod{q}\},
\end{align*}
and we define
\begin{align*}
\begin{cases}
  M=\prod_{p\in\CP_1(\fm)}p\\
  L=\prod_{p\in\CP_2(\fm)}p.
\end{cases}
\end{align*}

\begin{lem}\label{construct Eisenstein}Notations as above. Then $(M,L,\chi)$ belongs to $\CH(DC)$.
\end{lem}
\begin{proof}It is clear that $\fm=(q,I^{(DC)}_{M,L,\chi})$. To prove $(M,L,\chi)\in\CH(DC)$, we only need to show that $M>1$. Suppose $M=1$. Then $\chi$ is is the trivial character. Moreover, for any prime divisor $p$ of $D$, we have $T^{(DC)}_p\equiv p\pmod{q}$ and $p\neq1\pmod{q}$.

If $p$ is a prime divisor of $C$, then $T^{(DC)}_p$ is mapped to zero in $\BT(DC)^{p\text{-new}}$, so the image of $\fm$ in $\BT(DC)^{p\text{-new}}$ is the unit ideal because it contains $T^{(DC)}_p-p=-p$. In  particular $\BT(DC)^{p\text{-new}}/\fm=0$ and hence $\BT(DC)^{p\text{-old}}/\fm$ must be nonzero. Similarly, since $T^{(DC)}_p$ is mapped to zero via $\varphi''_p$, it follows that $\BT(DC)^{p\text{-new}}/\fm=0$. Therefore $\fm$ must be a maximal ideal in $\BT(DC/p)$. Then we find inductively that there exists a maximal ideal $\bar{\fm}$ in the Hecke algebra $\BT(D)$ such that: $T^{({D})}_\ell\equiv1+\ell\pmod{\bar{\fm}}$ for any prime $\ell$ not dividing ${D}$, $T^{({D})}_p\equiv p\pmod{\bar{\fm}}$ for any prime divisor $p$ of $D$.

If $p'$ is a prime divisor of $D$ and $\BT(D)^{p'\text{-old}}/\bar{\fm}$ is nonzero, then
\begin{align*}
  \BT(D)/\bar{\fm}&\simeq\BT(D)^{{p'}\text{-old}}/\bar{\fm}\\
  &\simeq\BT(D/p')[x]/(x^2-T^{(D/p')}_{p'}x+p',\ \bar{\fm}),
\end{align*}
which implies that there is a maximal ideal ${\fm}'$ in $\BT(D/p)$ such that: $T^{({D/p})}_\ell\equiv1+\ell\pmod{{\fm'}}$ for any prime $\ell$ not dividing ${D}$, $T^{({D/p'})}_{p}\equiv p\pmod{{\fm'}}$ for any prime divisor $p$ of $D/p'$. Proceeding in this way, we obtain a divisor $d$ of $D$ and a maximal ideal ${\fn}$ in $\BT(d)$ such that: $T^{({d})}_\ell\equiv1+\ell\pmod{{\fn}}$ for any prime $\ell$ not dividing ${d}$, $T^{({d})}_p\equiv p\pmod{{\fn}}$ and $\fn$ is $p$-new for any prime divisor $p$ of $d$. But this contradicts to Theorem 2.6,(ii) of \cite{BD} since $p\neq1\pmod{q}$ for any prime $p$ dividing $d$, so we complete the proof.
\end{proof}

\begin{remark}
  By the above lemma, to any maximal ideal $\fm$, we have an associated Eisenstein series $E_{M,L,\chi}$ such that $\fm=(q,I^{(DC)}_{M,L,\chi})$.
\end{remark}

\subsection{Proof of Theorem~\ref{M}:}Let $q$ be a prime such that $(q,6D\cdot\varphi(D/C)\cdot\varpi(C/\ff_\chi))=1$. By the Eichler-Shimura relation, $J_0(DC)(\chi)_{\tor}$, as an \etale group scheme over $\BZ[1/D]$, is annihilated by $I_\chi$-module. Therefore $J_0(DC)(\chi)[q^\infty]$ is a $\BT(DC)_q/I_{\chi}$-module, and it follows from \ref{decomposition} that
\begin{align*}
  \begin{cases}
    J_0(DC)(\chi)[q^{\infty}]=\bigoplus_{\fm} J_0(DC)(\chi)[{\fm}^{\infty}]\\
    C_0(DC)(\chi)[q^{\infty}]=\bigoplus_{\fm} C_0(DC)(\chi)[{\fm}^{\infty}],
  \end{cases}
\end{align*}
where $\fm$ runs over all maximal ideals containing $(q,I_\chi)$.

Fix such a maximal ideal $\fm$. Let $(M,L,\chi)$ be the associated triple as in Lemma~\ref{construct Eisenstein}. We claim that $J_0(DC)(\chi)[\fm^{\infty}]$ is annihilated by $I^{(DC)}_{M,L,\chi}$. Let $p$ be a prime divisor of $D$. Then there is an exact sequence of $\BT(DC)$-modules
   \[
\xymatrix@C=0.5cm{
  0 \ar[r] & J_0(DC)_{p\text{-old}}(\chi)[\fm^\infty] \ar[r] & J_0(DC)(\chi)[\fm^\infty] \ar[r] & J_0(DC)^{p\text{-new}}(\chi)[\fm^\infty]}.
  \]
\begin{itemize}
  \item If $p$ divides $\ff_\chi$, then $J_0(DC)_{p\text{-old}}(\chi)[\fm^\infty]=0$ by Proposition~\ref{prop6.1}. It follows that $J_0(DC)(\chi)[\fm^\infty]$ is contained in $J_0(DC)^{p\text{-new}}(\chi)[\fm^\infty]$, and is therefore annihilated by $T^{(DC)}_p$.
  \item We next prove $\BT(DC)^{p\text{-new}}_{\fm}=0$ if $p$ divides $D/\ff_\chi$. For otherwise there would exist a $p$-newform with an associated $q$-adic representation $\rho$ such that $\bar{\rho}^{ss}=\rho_\fm\simeq\chi\oplus\chi\epsilon_q$, where $\bar{\rho}^{ss}$ is the semisiplification of the reduction $\bar{\rho}$ of $\rho$ modulo $q$. So we are in the so-called degenerate case (see \cite{Ca} and \cite{Li}). However, when $p$ divides $C$, this degeneration is impossible since $(q,p^2-1)=1$ (see Proposition 1.1 of \cite{BD}). When $p$ divides $D/C$, then the restriction of $\bar{\rho}$ to ${I_p}$ is trivial as $(q,p-1)=1$, and hence $\codim(\bar{\rho}^{I_p})=0$. Since the Swan conductor does not change under reduction (see Proposition 1.1 of \cite{Li}), the degeneration of $\rho$ at $p$ would imply that $\codim(\rho^{I_p})=1$, so we find that
      \begin{align*}
        \rho|_{I_p}\sim\left(
                         \begin{array}{cc}
                           1 & b \\
                           0 & 1 \\
                         \end{array}
                       \right),
      \end{align*}
      where $b$ a non-trivial homomorphism from $I_p$ to a pro-$q$ group. But this is impossible again because $(q,p-1)=1$. So we prove the assertion.
  \item It follows that $J_0(DC)(\chi)[\fm^\infty]=J_0(DC)_{p\text{-old}}(\chi)[\fm^\infty]$ for any prime divisor $p$ of $D/\ff_\chi$. Since $J_0(DC)_{p\text{-old}}$ is a quotient of $J_0(DC/p)^2$, $T^{(DC/p)}_p$ acts on $J_0(DC)(\chi)[\fm^\infty]$ as multiplication by $\chi(p)+\chi(p)p$ by the Eichler-Shimura relation. It follows that $(T^{(DC)}_p-\chi(p))(T^{(DC)}_p-\chi(p)p)$ annihilates $J_0(DC)(\chi)[\fm^\infty]$. As $p\neq1\pmod{q}$, we find that $J_0(DC)(\chi)[\fm^\infty]$ is either annihilated by $T^{(DC)}_p-\chi(p)$ when $p\mid M$, or annihilated by
      $T^{(DC)}_p-\chi(p)p$ when $p\mid L$, which completes the proof of the claim.
\end{itemize}
In particular we find that $J_0(DC)(\chi)[\fm^\infty]$ is a $\BT(DC)_q/I^{(DC)}_{M,L,\chi}$-module. Consider $J_0(DC)[\fm]$ as a finite flat group scheme over $\BZ_q$. Note that $J_0(DC)(\chi)[\fm]$ is contained in $ J_0(DC)[\fm]^\et$. By composing $J_0(DC)(\chi)\hookrightarrow J_0(DC)(\overline{\BF}_q)$ with $J_0(DC)(\overline{\BF}_q)\hookrightarrow H^0(X_0(DC)_{/\overline{\BF}_q},\Omega)$ (see Proposition 14.7 of \cite{M}), we find that there is an injection $J_0(DC)(\chi)[\fm]\hookrightarrow H^0(X_0(DC)_{/\overline{\BF}_q},\Omega)[\fm]$. Since $H^0(X_0(DC)_{/\overline{\BF}_q},\Omega)[\fm]$ is isomorphic to $S^{B}(\Gamma_0(DC),\overline{\BF}_q)[\fm]$, it follows from the $\fq$-expansion principle that $\dim_{\BF_q}(J_0(DC)(\chi)[\fm])\leq1$ and so $J_0(DC)(\chi)[\fm^\infty]$ is a cyclic group. Therefore
\begin{align*}
|J_0(DC)(\chi)[\fm^\infty]|&\leq|\BT(DC)_q/I^{(DC)}_{M,L,\chi}|\\
&=|\CC^{(DC)}_{M,L,\chi}|,
\end{align*}
with the second equality holds by Proposition~\ref{index}. It is clear that $\CC^{(DC)}_{M,L,\chi}\subseteq J_0(DC)(\chi_{C})[\fm^\infty]$. So we find by the above inequality that $J_0(DC)(\chi)[\fm^\infty]=\CC^{(DC)}_{M,L,\chi}[q^\infty]$, which is contained in $C_0(DC)(\chi)[\fm^\infty]$, and hence complete the proof.

\end{document}